\documentclass[11pt]{amsart}
\usepackage[T1]{fontenc}
\usepackage{lmodern}
\usepackage{amsmath,amssymb,amsthm,mathtools}
\usepackage[margin=1in]{geometry}
\usepackage{microtype}
\usepackage[colorlinks=true,linkcolor=blue,citecolor=blue,urlcolor=blue]{hyperref}
\usepackage{comment}

\newtheorem{theorem}{Theorem}[section]
\newtheorem{proposition}[theorem]{Proposition}
\newtheorem{lemma}[theorem]{Lemma}
\newtheorem{corollary}[theorem]{Corollary}
\theoremstyle{definition}
\newtheorem{definition}[theorem]{Definition}
\newtheorem{example}[theorem]{Example}
\theoremstyle{remark}
\newtheorem{remark}[theorem]{Remark}

\newcommand{\R}{\mathbb R}
\newcommand{\C}{\mathbb C}
\newcommand{\K}{\mathbb K}
\newcommand{\T}{\mathbb T}

\newcommand{\norm}[1]{\left\lVert#1\right\rVert}
\newcommand{\abs}[1]{\left|#1\right|}
\newcommand{\diamset}{\mathcal D}
\newcommand{\Lip}{\operatorname{Lip}}

\newcommand{\AC}{\operatorname{AC}}

\title[Extension of isometries between unit spheres]
{Extension of surjective isometries between unit spheres of $\ell^p$-direct sums}
\author{Min-Ruei Lin}
\address{Department of Applied Mathematics, National Sun Yat-sen University, Kaohsiung 80424, Taiwan; and Graduate School of Science and Technology, Niigata University, Niigata 950--2181, Japan}
\email{m082030021@student.nsysu.edu.tw}
\subjclass[2020]{Primary 46B04; Secondary 46B20, 46E15, 46E40}
\keywords{Tingley's problem, direct sum, strictly convex space, absolutely continuous function}
\begin{document}

\begin{abstract}
We study surjective isometries between the unit spheres of Banach spaces of the form $A\oplus_p B$, where $A$ is nonzero and strictly convex, $1<p<\infty$, and every unit vector of $B\neq\{0\}$ has at least two distinct unit vectors at distance $2$.
For two direct sums $A_i\oplus_{p_i}B_i$, $i=1,2$, satisfying these hypotheses, we prove that every surjective isometry 
$$\Delta:S_{A_1\oplus_{p_1}B_1}\to S_{A_2\oplus_{p_2} B_2}$$
between their unit spheres forces equality of the direct-sum exponents $p_i$ and extends uniquely to a surjective real-linear isometry
$$\widetilde{\Delta}:A_1\oplus_p B_1\to A_2\oplus_p B_2$$
preserving the corresponding components, where $p:=p_1=p_2$.
The proof identifies the component unit spheres through diametral sets and distance formulas, and then constructs isometries between the component unit balls. 

As a principal application, we obtain extension and representation results for absolutely continuous functions equipped with norms combining evaluation at a fixed point and the $L^1$-norm of the derivative. 
We also discuss Lipschitz and differentiable function spaces, distinguishing the consequences of our theorem from those already covered by existing extension results. 
Endpoint conclusions are derived separately, including the Mazur--Ulam property for the real absolutely continuous space with the sum norm.
\end{abstract}
\maketitle

\section{Introduction}
In 1987, Tingley~\cite{Tingley} formulated the following problem:
if $X$ and $Y$ are Banach spaces and $\Delta:S_X\to S_Y$ is a surjective isometry between their unit spheres, does $\Delta$ extend to a surjective real-linear isometry from $X$ onto $Y$?
Here $S_X$ denotes the unit sphere of $X$.
Tingley's problem has received considerable attention, and positive solutions have been obtained for many classical Banach spaces and spaces arising in operator algebra theory.
For example, Wang~\cite{WangContinuous} established an extension theorem for scalar-valued continuous-function spaces, and Tan~\cite{Tan} proved that real $L^1$-spaces have the Mazur--Ulam property.
Mori and Ozawa \cite{MoriOzawa} proved the Mazur--Ulam property for unital $C^*$-algebras, meaning that the target space may be arbitrary.
Banakh \cite{Banakh} resolved the problem for two-dimensional Banach spaces.
Further developments include the Mazur--Ulam property for unital $JB^*$-algebras~\cite{PeraltaSvarc}.
We refer to~\cite{PeraltaSurvey} for a survey of developments concerning operator algebras.

Despite these advances, Tingley's problem remains open in general, including for arbitrary finite-dimensional Banach spaces of dimension at least three. 
The known positive results therefore continue to motivate the search for geometric hypotheses under which the behavior of a surjective isometry on the unit sphere forces linearity.
For direct sums, a basic issue is whether the isometry recognizes the component spaces.
Once the component unit spheres have been identified, one can ask whether their images determine isometries between the component unit
balls and hence linear extensions.

Direct sums of continuous-function spaces provide an important setting for this approach.
Wang and Orihara~\cite{WangOrihara} studied surjective isometries between the unit spheres of $\ell^1$-sums of $C_0(\Omega,E)$-type spaces.
Wang~\cite{WangLp} subsequently established extension and representation theorems for $\ell^p$-sums of $C_0(\Omega,E)$-type spaces, where $1<p<\infty$.
In particular, \cite[Theorem~1.8]{WangLp} applies to
\[
 A\oplus_p C(K,E),
\]
where $A$ is a nonzero strictly convex Banach space, $K$ is a compact Hausdorff space containing at least two points, and $E$ is an arbitrary nonzero Banach space.
In fact, the first component can be represented as $C_0(\{*\},A)$.
Thus the freedom to choose an arbitrary strictly convex first component is already present in Wang's theorem.

The present paper considers a geometric condition \textup{(D)} on the second component that does not require a specified continuous-function representation.
Condition \textup{(D)} is defined in the next section.
We state the main theorem as follows.

\begin{theorem}\label{thm:main}
For $i=1,2$, let $A_i$ be a nonzero strictly convex Banach space, let $B_i$ be a nonzero Banach space satisfying condition \textup{(D)}, and let $1<p_i<\infty$.
Then the following statements are equivalent:
\begin{enumerate}
\item There exists a surjective isometry $\Delta:S_{A_1\oplus_{p_1} B_1}\to S_{A_2\oplus_{p_2} B_2}$.
\item $p_1=p_2=:p$, and there exist surjective real-linear isometries $U:A_1\to A_2$ and $V:B_1\to B_2$.
\end{enumerate}
Moreover, for each such $\Delta$, there are unique
surjective real-linear isometries $U$ and $V$ satisfying
\begin{equation}\label{eq:block}
 \Delta(u,v)=(U(u),V(v)),
 \qquad (u,v)\in S_{A_1\oplus_p B_1}.
\end{equation}
The operator $U\oplus V$ is the unique surjective real-linear isometric extension of $\Delta$.
\end{theorem}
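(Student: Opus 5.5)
The plan is to prove (1)$\Rightarrow$(2) together with the block form \eqref{eq:block}. The converse (2)$\Rightarrow$(1) is trivial: $U\oplus V$ restricted to the sphere is the required isometry. Uniqueness is also automatic, since a real-linear map is determined by its values on the unit sphere.

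\textbf{Step 1: detect the components metrically.} Condition (D) says every unit vector of $B$ has at least two distinct unit vectors at distance $2$. For $X=A\oplus_p B$ and $x=(u,v)\in S_X$, consider the set $D(x)=\{y\in S_X:\norm{x-y}=2\}$ of points diametral to $x$.
\begin{itemize}
\item Using the equality case of Minkowski's inequality in $\ell^p$ (which is strict for $1<p<\infty$), $\norm{x-y}=2$ forces $(\norm{u},\norm{v})=(\norm{u'},\norm{v'})$ for $y=(u',v')$, and also $\norm{u-u'}=2\norm{u}$ and $\norm{v-v'}=2\norm{v}$.
\item Strict convexity of $A$ gives $u'=-u$.
\item So $D(x)$ is the singleton $\{-x\}$ exactly when either $v=0$, or $v\neq 0$ and $v$ has a unique diametral partner. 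By (D) the second case never occurs.
\end{itemize}
Hence $S_A\times\{0\}$ is characterized as the set of $x$ with $D(x)$ a singleton. Since $\Delta$ preserves $D(\cdot)$, it maps $S_{A_1}\times\{0\}$ onto $S_{A_2}\times\{0\}$.

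Next, $\{0\}\times S_B$ is recovered as the set of points at distance $2^{1/p}$ from every point of $S_A\times\{0\}$: if $\norm{u}=t$, then
$\norm{(u,v)-(a,0)}^p=\norm{u-a}^p+1-t^p$,
and this equals $2$ for all $a\in S_A$ only if $t=0$. Since the sphere $S_{A_1}$ contains $a,-a$ with $\norm{(a,0)-(0,b)}=2^{1/p_1}$, comparing with the image gives $2^{1/p_1}=2^{1/p_2}$, i.e.\ $p_1=p_2$. A cleaner way to phrase this is to match the distance between the two component spheres.

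\textbf{Step 2: the component maps.} The restrictions give surjective isometries $S_{A_1}\to S_{A_2}$ and $S_{B_1}\to S_{B_2}$. For a general point $(u,v)$ with $\norm{u}=t\in(0,1)$, its distances to $(a,0)$ and to $(0,b)$ encode $\norm{u-ta'}$-type data. Specifically, the map $a\mapsto\norm{(u,v)-(a,0)}^p-(1-t^p)=\norm{u-a}^p$ recovers $\norm{u-a}$ for all $a\in S_A$, and similarly for $v$. Taking $a=\pm u/\norm{u}$ shows that $\Delta(u,v)=(u^*,v^*)$ with $\norm{u^*}=\norm{u}$, and defines maps
\[
U_t:tS_{A_1}\to tS_{A_2},\qquad V_s:sS_{B_1}\to sS_{B_2},
\]
where $t^p+s^p=1$ and $\norm{U_t u-\Delta(a,0)}=\norm{u-a}$ for all $a\in S_{A_1}$. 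Independence of $U_t(u)$ from $v$ (and of $V_s(v)$ from $u$) needs an argument. For $A$ this follows from strict convexity, because $u$ is determined by its distances to $S_A$. For $B$ I would use that $v$ at norm $s$ is determined by the distances $\norm{v-b}$, $b\in S_B$; I plan to obtain this via (D) and a "supporting diametral point" argument.

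\textbf{Step 3: extend to the balls and apply Mazur--Ulam.} Assembling the $U_t$ gives a map $U:B_{A_1}\to B_{A_2}$ with $U(0)=0$, norm preserving, and with $U(tu)=tU(u)$ once we show compatibility with the sphere map. It remains to show that $U$ is an isometry of the balls. For this I would compute $\norm{(u,v)-(u',v')}^p=\norm{u-u'}^p+\norm{v-v'}^p$, and choose pairs with $v,v'$ aligned so that $\norm{v-v'}=\abs{s-s'}$. The hard part is producing such pairs in $B$ and showing $V$ preserves them; an isometry is a surjective isometry of balls, so then Mankiewicz's theorem extends it to a real-linear surjective isometry $U$, and similarly for $V$. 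Finally, \eqref{eq:block} holds by construction.

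The main obstacle is Step 2/3 for the $B$ component, where only condition (D) is available. I would handle it by first proving $V(sb)=sV(b)$ through the distance identity
\[
\norm{(u,sb)-(0,b')}^p=t^p+\norm{sb-b'}^p
\]
and comparing with the images. After that, I would show $\norm{sb-s'b'}$ is preserved by comparing the distance between $(t u_0,sb)$ and $(t'u_0,s'b')$, where $u_0\in S_{A_1}$ is fixed and $U$ is already known to be positively homogeneous. This isolates $\norm{sb-s'b'}$ and yields that the ball map is an isometry.
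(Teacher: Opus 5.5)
Your Step~1 and the $A$-side of Step~2 follow the paper's argument. Singleton diametral sets pick out $S_A\times\{0\}$. Constant distance to that set picks out $\{0\}\times S_B$ and forces $p_1=p_2$. The unique maximizer $-u/\|u\|$, together with oddness of the sphere map $\phi$, gives $U(u)=\|u\|\phi(u/\|u\|)$.

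\textbf{The step that fails.} In your last paragraph you propose to prove $V(sb)=sV(b)$ ``through the distance identity and comparing with the images''. Write $\psi:S_{B_1}\to S_{B_2}$ for the restricted sphere map. The comparison only yields $\|V(sb)-\psi(b')\|=\|sb-b'\|$ for all $b'\in S_{B_1}$. To conclude $V(sb)=s\psi(b)$ you would need $\|s\psi(b)-\psi(b')\|=\|sb-b'\|$ for all $b'$. That is a statement about a bare sphere isometry of $B$, essentially Tingley's problem for $B$, and it is not available at that stage. For instance, $b'=b$ gives only $\|\psi(b)-V(sb)\|+\|V(sb)\|=1$. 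In $\ell_\infty^2$ with $\psi(b)=(1,0)$ and $s=1/2$, this is satisfied by both $(1/2,0)$ and $(1/2,1/2)$.

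\textbf{Why it is dispensable.} Your second computation compares $(tu_0,sb)$ with $(t'u_0,s'b')$. It uses only $U(tu_0)=tU(u_0)$, which you already have, and gives $\|V(sb)-V(s'b')\|=\|sb-s'b'\|$ directly. You also have $V(0)=0$, and surjectivity follows by taking preimages under $\Delta$. Mankiewicz's theorem then extends $V$ to a surjective real-linear isometry. After that, $\|u-u'\|^p+\|v-v'\|^p=\|Uu-Uu'\|^p+\|Vv-Vv'\|^p$ shows that $U$ is a ball isometry. So in your route $V$ must come first, and your ``hard part'' about $U$ becomes a consequence.

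The independence of $V_s(v)$ from $u$, which this needs, does not require (D) or diametral points. In any normed space, distinct $v_1,v_2$ with $\|v_1\|=\|v_2\|<1$ are separated by their distances to the unit sphere. Take $b=v_1+\lambda(v_2-v_1)$ with $\lambda>1$ and $\|b\|=1$. Then $\|v_1-b\|=\lambda\|v_2-v_1\|\neq(\lambda-1)\|v_2-v_1\|=\|v_2-b\|$.

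\textbf{The paper's order.} The paper handles $U$ first and avoids both issues. It fixes $w\in S_{B_1}$ and compares $\bigl(a,(1-\|a\|^p)^{1/p}w\bigr)$ with $\bigl(c,(1-\|c\|^p)^{1/p}w\bigr)$. Norm preservation and the reverse triangle inequality for the image second coordinates give $\|U_0a-U_0c\|\le\|a-c\|$. The same argument for $\Delta^{-1}$ gives the reverse inequality. So your concern about showing that $V$ preserves aligned pairs disappears, because an inequality suffices. Independence of the $B$-coordinate then follows from $U$ being an isometry, by comparing $(a,b)$ with $(c,b)$, and $V$ is handled afterwards.
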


We emphasize absolutely continuous and Lipschitz functions.
The main theorem gives real-linear extensions for $1<p<\infty$ and identifies the outer exponent.
Standard representations of the component isometries then yield explicit integral formulas.
The endpoint conclusions are obtained separately.
Adjoining an atom identifies the real absolutely continuous space at $p=1$ with a real $L^1$-space, so Tan's theorem gives the Mazur--Ulam property~\cite{Tan}.
For Lipschitz functions, the endpoints follow from continuous-function extension theorems and the $\ell^1$-sum theorem of Wang and Orihara~\cite{WangContinuous,WangOrihara}.
The preceding endpoint conclusions are consistent with the complex-valued Lipschitz results of Hirota and Miura~\cite{HirotaMiuraLip} and the related results in~\cite{CabezasEtAl}.
Atom-adjoining identifications also appear in Lin's work on positive unit spheres~\cite{LinAC,LinSobolev}; the conclusions here concern real-linear extensions from the full unit spheres.

For completeness, we also discuss scalar- and vector-valued differentiable functions through the identification
\[
 f\longmapsto
 \bigl((f^{(j)}(x_0))_{j=0}^{n-1},f^{(n)}\bigr).
\]
For the norms specified below, the displayed identification represents the spaces as direct sums involving $C(I,E)$.
For each fixed exponent $1<p<\infty$, these extension assertions also follow from Wang's theorem~\cite[Theorem 1.8]{WangLp}.
The same observation applies to the fixed-exponent Lipschitz assertion, since $L^\infty(I,\K)$ is isometrically a scalar continuous-function space.
The function-space applications illustrate the abstract theorem and its representations.

The paper is organized as follows.
Section~2 introduces definitions and notation and establishes preliminary results.
Section 3 identifies the components and establishes the main theorem.
Section~4 develops applications, including the absolutely continuous $\AC(I)$, Lipschitz $\Lip(I)$, continuous-functions $C(K,E)$ on compact Hausdorff $K$ and differentiable functions $C^n(I,E)$.

\section{Notation and the diametral hypothesis}
Throughout this paper, all Banach spaces are considered as real Banach spaces.
When a space is written over $\C$, we retain its complex-valued elements and its norm but restrict scalar multiplication to real scalars for this purpose.
Unless explicitly qualified as complex-linear or $\K$-linear, linearity means real linearity.
For a nonzero Banach space $E$, write
\[
 B_E=\{x\in E:\norm{x}\leq1\},\qquad
 S_E=\{x\in E:\norm{x}=1\}.
\]
For $x\in S_E$, define its \textit{diametral set} by
\[
 \diamset_E(x)=\{y\in S_E:\norm{x-y}=2\}.
\]
Notice that $-x\in\diamset_E(x)$ for every $x\in S_E$.
We give the definition of condition \textup{(D)}.

\begin{definition}\label{def:H}
A nonzero Banach space $B$ satisfies condition \textup{(D)} if $\diamset_B(b)$ contains at least two distinct points for every $b\in S_B$.
\end{definition}

\begin{proposition}\label{prop:Dsums}
If $E,F$ are nonzero Banach spaces, both $E\oplus_1F$ and $E\oplus_\infty F$ satisfy condition \textup{(D)}.
\end{proposition}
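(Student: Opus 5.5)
We must show that for every unit vector $(x,y)$ we can exhibit a second element of $\diamset((x,y))$ besides $-(x,y)$. In each case we write one explicitly and check its norm and its distance to $(x,y)$ directly.

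\emph{The $\ell^1$-sum.} Let $(x,y)\in S_{E\oplus_1F}$, so $\norm{x}+\norm{y}=1$. We split into cases according to which components vanish.

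If $x\neq0$ and $y\neq0$, take $z=(-x,y)$. It has norm $1$, and
\[
\norm{(x,y)-z}=\norm{(2x,0)}=2\norm{x}<2,
\]
so it fails. Instead take $z=\bigl(-x/\norm{x},0\bigr)$, which is a unit vector. Then
\[
\norm{(x,y)-z}=\norm{x}+1+\norm{y}=2 .
\]
Moreover $z\neq-(x,y)$ because $y\neq0$.

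If $y=0$, then $\norm{x}=1$. Pick any $f\in S_F$ (possible since $F\neq\{0\}$) and take $z=(0,f)$. Then $\norm{(x,0)-(0,f)}=1+1=2$, and $z\neq(-x,0)$. The case $x=0$ is symmetric, using a unit vector of $E$.

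\emph{The $\ell^\infty$-sum.} Let $(x,y)\in S_{E\oplus_\infty F}$, so $\max(\norm{x},\norm{y})=1$. By symmetry we may assume $\norm{x}=1$. Take $z=(-x,w)$ with $w\in F$ and $\norm{w}\le1$. Then $z$ is a unit vector, since its first component already has norm $1$. Also
\[
\norm{(x,y)-z}=\max\bigl(2,\norm{y-w}\bigr)=2,
\]
because $\norm{y-w}\le2$. Choosing $w\neq-y$ gives $z\neq-(x,y)$; for instance take $w=0$ if $y\neq0$, and $w$ any unit vector of $F$ if $y=0$. Such a $w$ exists because $F$ is nonzero.

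Nothing here is a real obstacle. The only place needing care is the $\ell^1$ case with both components nonzero, where the naive sign flip $(-x,y)$ does not reach distance $2$. The fix is to concentrate all the mass in one component, as done above.
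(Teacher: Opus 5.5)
Your proof is correct and follows essentially the same case analysis as the paper. In the $\ell^1$-sum you concentrate the mass in one component, and in the $\ell^\infty$-sum you vary the coordinate that need not have norm one, just as the paper does with two distinct $v_1,v_2\in B_F$. The only cosmetic difference is that you always take $-(x,y)$ as one of the two required points, whereas the paper exhibits two other explicit points, e.g.\ $(-e/\norm{e},0)$ and $(0,-f/\norm{f})$; your choice is legitimate since $-(x,y)$ always lies in the diametral set.
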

\begin{proof}
Let $(e,f)\in S_{E\oplus_1F}$.
If $\norm{e},\norm{f}>0$, the distinct unit vectors $(-e/\norm{e},0)$ and $(0,-f/\norm{f})$ are both at distance $\norm{e}+\norm{f}+1=2$ from $(e,f)$.
If $\norm{f}=0$, use $(-e,0)$ and $(0,v)$ for any $v\in S_F$.
If $\norm{e}=0$, use $(0,-f)$ and $(u,0)$ for any $u\in S_E$.

For $(e,f)\in S_{E\oplus_\infty F}$, at least one coordinate has norm one. If $\|e\|=1$, choose distinct $v_1,v_2\in B_F$.
Then $(-e,v_1)$ and $(-e,v_2)$ are distinct unit vectors at distance two from $(e,f)$. If $\|f\|=1$, argue in the other coordinate.
\end{proof}

\begin{example}
Condition \textup{(D)} holds for $\ell_1(\Gamma)$, $c_0(\Gamma)$, and $\ell_\infty(\Gamma)$ when $\Gamma$ contains at least two elements: split off one coordinate and apply Proposition~\ref{prop:Dsums}.
Condition \textup{(D)} also holds for $C(K,E)$ when $K$ is compact Hausdorff with at least two points and $E\ne\{0\}$; see Section 4.
\end{example}

Notice that a nonzero strictly convex Banach space $E$ does not satisfy \textup{(D)}, since $\mathcal D_E(x)=\{-x\}$ for every $x\in S_E$.
The direct-sum space $A\oplus_p B$ is endowed with
\[
\norm{(a,b)}_{A\oplus_pB}=
\begin{cases}
\bigl(\norm{a}_A^p+\norm{b}_B^p\bigr)^{1/p}\quad&(1\le p<\infty)\\
\max\{\|a\|_A,\|b\|_B\}\quad &(p=\infty)
\end{cases}.
\]
We use the following standard consequence of Mankiewicz's theorem.

\begin{theorem}[Mankiewicz \cite{Mankiewicz}]\label{thm:ball}
Let $E,F$ be real normed spaces. Every surjective isometry $G:B_E\to B_F$ with $G(0)=0$ is the restriction of a unique surjective real-linear isometry $\widetilde G:E\to F$.
\end{theorem}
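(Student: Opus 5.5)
The plan is to reduce the statement to the Mazur--Ulam theorem through a positively homogeneous extension. Given a surjective isometry $G:B_E\to B_F$ with $G(0)=0$, we first check that $G$ maps $S_E$ onto $S_F$: since $\norm{G(x)}=\norm{G(x)-G(0)}=\norm{x}$, norms are preserved, so spheres go to spheres and, by surjectivity, onto. We then define $\widetilde G:E\to F$ by $\widetilde G(0)=0$ and
\[
\widetilde G(x)=\frac{\norm{x}}{r}\,G\!\left(\frac{r}{\norm{x}}\,x\right)
\]
for any fixed $0<r\le 1$. This definition is to be made consistent by showing that $G(tx)=tG(x)$ for $x\in B_E$ and $0\le t\le 1$.

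The homogeneity $G(tx)=tG(x)$ is the first key step. It is exactly where Mankiewicz's argument enters: an isometry defined on a convex body with nonempty interior is affine. The cleanest route is therefore to invoke Mankiewicz's theorem in its original form, namely that every surjective isometry between convex bodies (convex sets with nonempty interior) extends uniquely to an affine isometry of the whole spaces. Since $B_E$ and $B_F$ are convex bodies, we obtain an affine surjective isometry $\widetilde G:E\to F$ extending $G$. Because $\widetilde G(0)=G(0)=0$, the map is real-linear. Bijectivity and surjectivity onto $F$ come from the theorem, or alternatively from linearity together with $\widetilde G(B_E)=B_F$.

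Uniqueness is then immediate. Any real-linear $\widetilde G$ agreeing with $G$ on $B_E$ is determined on all of $E$ by $\widetilde G(x)=\lambda\,G(x/\lambda)$ with $\lambda=\max\{1,\norm{x}\}$.

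The main obstacle lies in the midpoint-preservation step hidden inside Mankiewicz's theorem, if one wishes to avoid quoting it. Here the obvious plan would be to run the Väisälä reflection argument inside small balls $x+\varepsilon B_E\subset B_E$ around interior points, to get local affinity. Local affinity would then be propagated along segments through $0$ to obtain global affinity on $B_E$. Since the paper labels this a standard consequence, I would simply cite Mankiewicz's theorem and carry out only the normalization and uniqueness steps above.
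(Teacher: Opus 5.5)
Your proposal is correct and takes essentially the same route as the paper: you apply Mankiewicz's convex-body theorem to the unit balls, use $G(0)=0$ to pass from affine to linear, get surjectivity from the range containing $B_F$, and obtain uniqueness by the radial scaling $\widetilde G(x)=\lambda G(x/\lambda)$. The opening positively homogeneous construction and the V\"ais\"al\"a-style local sketch are never actually used in your final argument, so you can drop them.
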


In particular, closed unit balls are convex sets with nonempty norm interior.
Mankiewicz's theorem therefore gives an affine isometry of the ambient spaces extending the ball isometry.
The range of the affine extension contains the target unit ball; the linear part is consequently onto.
Fixing zero makes the affine extension linear.
Uniqueness follows because every vector is a positive scalar multiple of a vector in the ball.

Recall that a real normed space $A$ is strictly convex if $\norm{(x+y)/2}<1$ whenever $x,y\in S_A$ are distinct.
Equivalently, for nonzero $x,y\in A$, equality $\norm{x+y}=\norm{x}+\norm{y}$ implies
$x/\norm{x}=y/\norm{y}$. 
We use this characterization below.
At the end of this section, we recall a well-known fact.

\begin{lemma}\label{lem:radialunique}
Let $X,Y$ be nonzero real or complex normed spaces and let $\Delta:S_X\to S_Y$ be any map.
There is at most one real-linear operator $\widetilde{\Delta}:X\to Y$ extending $\Delta$.
Any such real-linear extension must satisfy
\begin{equation}\label{eq:radial_for}
 \widetilde{\Delta}(0)=0,\qquad
 \widetilde{\Delta}(x)=\norm{x}\Delta(x/\norm{x})\quad(x\ne0).
\end{equation}
If $X,Y$ are complex and this real-linear extension exists, then $\widetilde{\Delta}$ is complex-linear if and only if $\Delta(ix)=i\Delta(x)$ for all $x\in S_X$.
\end{lemma}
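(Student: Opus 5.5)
The plan is to derive the radial formula \eqref{eq:radial_for} first, and then obtain uniqueness and the complex-linearity criterion from it.

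Suppose $\widetilde{\Delta}:X\to Y$ is real-linear and extends $\Delta$, so $\widetilde{\Delta}(x)=\Delta(x)$ for every $x\in S_X$. Additivity gives $\widetilde{\Delta}(0)=\widetilde{\Delta}(0)+\widetilde{\Delta}(0)$, hence $\widetilde{\Delta}(0)=0$. For $x\neq0$ the vector $x/\norm{x}$ lies in $S_X$, and $\norm{x}$ is a positive real scalar. Real homogeneity then yields
\[
\widetilde{\Delta}(x)=\norm{x}\,\widetilde{\Delta}(x/\norm{x})=\norm{x}\,\Delta(x/\norm{x}).
\]
This establishes \eqref{eq:radial_for}. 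Its right-hand side depends only on $\Delta$, so any two real-linear extensions agree at every point of $X$. This proves uniqueness. No continuity or isometry assumption is needed.

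For the complex case, assume $X,Y$ are complex and the real-linear extension $\widetilde{\Delta}$ exists. A real-linear map is complex-linear exactly when $\widetilde{\Delta}(ix)=i\widetilde{\Delta}(x)$ for all $x\in X$. Indeed, $(a+ib)x=ax+i(bx)$, and real-linearity combined with commutation with $i$ gives complex homogeneity.

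\emph{Necessity.} If $\widetilde{\Delta}$ is complex-linear, take $x\in S_X$. Then $ix\in S_X$, so $\Delta(ix)=\widetilde{\Delta}(ix)=i\widetilde{\Delta}(x)=i\Delta(x)$.

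\emph{Sufficiency.} Assume $\Delta(ix)=i\Delta(x)$ on $S_X$. For $x=0$ both sides of $\widetilde{\Delta}(ix)=i\widetilde{\Delta}(x)$ vanish. For $x\neq0$ we have $\norm{ix}=\norm{x}$ and $ix/\norm{ix}=i(x/\norm{x})\in S_X$. The radial formula therefore gives
\[
\widetilde{\Delta}(ix)=\norm{x}\,\Delta\bigl(i\,x/\norm{x}\bigr)=\norm{x}\,i\,\Delta(x/\norm{x})=i\,\widetilde{\Delta}(x).
\]

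None of these steps is a genuine obstacle. The only points needing care are to use homogeneity only for real scalars, namely $\norm{x}>0$, and to note that $\norm{ix}=\norm{x}$ holds in any complex normed space.
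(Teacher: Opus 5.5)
Your proof is correct. The paper states this lemma as a well-known fact without proof. Your argument is the standard one the paper relies on (for example in the uniqueness step of the proof of Theorem~\ref{thm:main}): real homogeneity with the positive scalar $\norm{x}$ gives the radial formula, which yields uniqueness with no isometry assumption, and $\norm{ix}=\norm{x}$ gives the complex-linearity criterion.
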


The uniqueness in Lemma~\ref{lem:radialunique} requires no isometry assumption.
Also, this lemma does not assert existence.
For complex spaces, identity and conjugation give different maps on the full unit sphere, so they do not contradict uniqueness of the real-linear extension of a fixed map.

\section{The main extension theorem}
\begin{lemma}\label{lem:singleton}
Let $A$ be a nonzero strictly convex Banach space, let $B$ satisfy \textup{(D)}, and let $X=A\oplus_p B$, where $1<p<\infty$.
Then
\[
P_X:=\{x\in S_X:\diamset_X(x)\text{ is a singleton}\}=S_A\times\{0\}.
\]
For $a\in S_A$, one has $\diamset_X\big((a,0)\big)=\big\{(-a,0)\big\}$.
\end{lemma}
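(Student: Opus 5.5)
The plan is to characterize, for a unit vector $x=(a,b)$, when $\norm{x-y}=2$ can hold. The key tool is the triangle inequality in $A$ and $B$ combined with Minkowski's inequality in $\ell^p_2$ together with its equality case; this is where the strict convexity of $\ell^p_2$ for $1<p<\infty$ enters.

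\emph{Step 1 (reduction).} Let $x=(a,b)$ and $y=(c,d)$ be in $S_X$ with $\norm{x-y}=2$. Then
\[
2=\norm{x-y}\le\bigl((\norm a+\norm c)^p+(\norm b+\norm d)^p\bigr)^{1/p}\le \norm{(\norm a,\norm b)}_p+\norm{(\norm c,\norm d)}_p=2 .
\]
Hence equality holds throughout. The first equality forces
\[
\norm{a-c}=\norm a+\norm c,\qquad \norm{b-d}=\norm b+\norm d,
\]
since both terms are monotone and $p<\infty$. The second equality is equality in Minkowski for $\ell^p_2$, $1<p<\infty$, with unit vectors $(\norm a,\norm b)$ and $(\norm c,\norm d)$. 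It forces $(\norm c,\norm d)=(\norm a,\norm b)$.

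\emph{Step 2 ($x=(a,0)$ with $a\in S_A$).} By Step 1, $\norm c=1$ and $d=0$, and $\norm{a-c}=2$. Strict convexity, in the form recalled above applied to $a$ and $-c$, gives $c=-a$. So $\diamset_X((a,0))=\{(-a,0)\}$, which is a singleton; hence $S_A\times\{0\}\subseteq P_X$.

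\emph{Step 3 (converse).} Suppose $b\ne0$. I will exhibit two distinct diametral points.

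If $a=0$, then $\norm b=1$. By (D) there are distinct $d_1,d_2\in\diamset_B(b)$, and then $(0,d_1),(0,d_2)\in\diamset_X(x)$.

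If $a\ne0$, apply (D) to $b/\norm b$ to get distinct $d_1,d_2\in S_B$ with $\norm{b/\norm b-d_i}=2$. Then for $i=1,2$,
\[
\norm{b-\norm b d_i}\ge 2\norm b-\norm b\,\norm{b/\norm b-\cdots}
\]
is not the cleanest route; directly, $\norm{b-\norm b d_i}=\norm b\,\norm{b/\norm b-d_i}=2\norm b$. Set $y_i=(-a,\norm b d_i)$. Each $y_i$ is a unit vector, and
\[
\norm{x-y_i}^p=(2\norm a)^p+(2\norm b)^p=2^p .
\]
The points $y_1,y_2$ are distinct because $\norm b\ne0$. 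So $\diamset_X(x)$ is not a singleton, and $x\notin P_X$. This gives $P_X\subseteq S_A\times\{0\}$.

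The main subtle point is the equality case of Minkowski in Step 1, which needs the ambient $\ell^p_2$ norm to be strictly convex. Everything else is bookkeeping.
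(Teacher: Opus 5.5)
Your proof is correct and follows essentially the paper's argument. The converse construction $(-a,\norm{b}d_i)$ via (D) applied to $b/\norm{b}$ is identical to the paper's. For $S_A\times\{0\}\subseteq P_X$, you use the equality case of Minkowski in $\ell^p_2$ where the paper uses the monotonicity of $g_p(t)=(1+t)^p+1-t^p$; these establish the same strictness fact, and your Step 1 additionally records it for arbitrary diametral pairs. The only blemish is the aborted displayed inequality containing $\cdots$ in Step 3, which should simply be deleted.
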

\begin{proof}
Suppose that $a\in S_A$ and $(c,d)\in S_X$. Put $t=\norm{c}_A$.
The triangle inequality gives
\begin{align*}
\norm{(a,0)-(c,d)}_X^p&= \norm{a-c}_A^p + \norm{d}_B^p\\
&= \norm{a-c}_A^p + 1-t^p\\
&\le(\norm{a}_A + \norm{c}_A)^p + 1-t^p\\
&=(1+t)^p+1-t^p.
\end{align*}
We observe that, for $1<p<\infty$, the function $g_p(t)=(1+t)^p+1-t^p$ is strictly increasing on $[0,1]$, since $g_p'(t)=p((1+t)^{p-1}-t^{p-1})>0$.
Consequently, distance two forces $t=1$, and hence $d=0$.
It also forces $\norm{a-c}_A=2$ with $c\in S_A$.
Strict convexity of $A$ implies $c=-a$.
That is, $(c,d)=(-a,0)$, and hence $\diamset_X((a,0)) = \{(-a,0)\}$, proving that $S_A\times\{0\}\subseteq P_X$.

Conversely, let $(a,b)\in S_X$ with $r=\norm{b}_B\neq0$.
By \textup{(D)}, choose distinct $v_1,v_2\in S_B$ with $\norm{b/r-v_1}_B=\norm{b/r-v_2}_B=2$.
For $k=1,2$, the two distinct points $(-a,rv_k)$ belong to $S_X$, since
\[
\norm{(-a,rv_k)}_X^p
=\norm{a}_A^p+r^p\norm{v_k}_B^p
=\norm{a}_A^p+r^p
=\norm{a}_A^p+\norm{b}_B^p=1.
\]
Observe that 
\begin{align*}
\norm{(a,b)-(-a,rv_k)}_X^p&=\norm{2a}^p_A + \norm{b-rv_k}^p_B\\
&=2^p\norm{a}^p_A+r^p\norm{b/r - v_k}^p_B\\
&=2^p\norm{a}_A^p+2^pr^p\\
&=2^p(\norm{a}^p_A+\norm{b}^p_B)=2^p.
\end{align*}
Thus, $\diamset_X\big((a,b)\big)$ with $\norm{b}_B\neq0$ is not a singleton.
Equivalently, $\mathcal D_X((a,b))$ being a singleton implies $b=0$, proving that $P_X\subseteq S_A\times\{0\}$.
Therefore, $P_X=S_A\times\{0\}$.
\end{proof}

Lemma~\ref{lem:singleton} identifies the first component unit sphere using distances on the unit sphere. 
We next calculate distances to $P_X$ to identify the second component unit sphere and the direct-sum exponent.

\begin{lemma}\label{lem:profile}
Under the hypotheses of Lemma~\ref{lem:singleton}, let $(a,b)\in S_X$.
Then
\begin{align}
 \max_{c\in S_A}\norm{(a,b)-(c,0)}_X^p
   &=(1+\norm{a}_A)^p+1-\norm{a}_A^p,\label{eq:max}\\
 \min_{c\in S_A}\norm{(a,b)-(c,0)}_X^p
   &=(1-\norm{a}_A)^p+1-\norm{a}_A^p.\label{eq:min}
\end{align}
If $\norm{a}_A\neq0$, the unique maximizer in \eqref{eq:max} is $c=-a/\norm{a}_A$.
\end{lemma}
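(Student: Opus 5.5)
Since $\norm{(a,b)}_X=1$, we have $\norm{b}_B^p=1-\norm{a}_A^p$. For every $c\in S_A$,
\[
\norm{(a,b)-(c,0)}_X^p=\norm{a-c}_A^p+1-\norm{a}_A^p .
\]
So both \eqref{eq:max} and \eqref{eq:min} reduce to a one-variable problem in $A$: with $t=\norm{a}_A\in[0,1]$, find the supremum and infimum of $\norm{a-c}_A$ over $c\in S_A$. We must show these are attained and equal $1+t$ and $1-t$, respectively. Because $s\mapsto s^p$ is strictly increasing on $[0,\infty)$, it suffices to work with $\norm{a-c}_A$ itself.

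\emph{Upper bound and maximizer.} The triangle inequality gives $\norm{a-c}_A\le \norm{a}_A+\norm{c}_A=1+t$. If $t>0$, take $c=-a/t\in S_A$; then $a-c=a(1+1/t)$ has norm $t+1$, so the maximum is attained. If $t=0$, every $c\in S_A$ gives the value $1=1+t$. For uniqueness when $t\neq0$, suppose $\norm{a-c}_A=1+t=\norm{a}_A+\norm{-c}_A$. Both $a$ and $-c$ are nonzero, so the strict convexity characterization recalled before Lemma~\ref{lem:radialunique} gives $a/\norm{a}_A=-c/\norm{c}_A$, that is, $c=-a/\norm{a}_A$.

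\emph{Lower bound and minimizer.} The reverse triangle inequality gives $\norm{a-c}_A\ge \norm{c}_A-\norm{a}_A=1-t\ge0$. If $t>0$, equality holds at $c=a/t$, since $a-c=a(1-1/t)$ has norm $\abs{t-1}=1-t$. If $t=0$, every $c$ gives $1=1-t$. Hence the minimum is attained and equals $(1-t)^p+1-t^p$.

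The argument is elementary throughout. The only point that needs care is the uniqueness of the maximizer, where strict convexity enters through the equality case of the triangle inequality, together with the separate handling of $a=0$. Condition \textup{(D)} on $B$ is not used here; only $\norm{b}_B^p=1-\norm{a}_A^p$ matters.
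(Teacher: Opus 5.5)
Your proof is correct and follows essentially the same route as the paper: the triangle-inequality bounds $1-\norm{a}_A\le\norm{a-c}_A\le 1+\norm{a}_A$, attainment at $c=\mp a/\norm{a}_A$ (with $a=0$ handled separately), and uniqueness of the maximizer from the equality case of the triangle inequality in the strictly convex space $A$. Your explicit remark that $s\mapsto s^p$ is strictly increasing, which justifies passing from $\norm{a-c}_A$ to its $p$-th power, is a small step the paper leaves implicit.
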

\begin{proof}
For $c\in S_A$, the triangle inequality yields
\[
 1-\norm{a}_A\leq\norm{a-c}_A\leq1+\norm{a}_A,
 \qquad \norm{b}_B^p=1-\norm{a}_A^p.
\]
Consequently, we have equalities~\eqref{eq:max} and~\eqref{eq:min}. 
If $\norm{a}_A\neq 0$, equality of $\norm{a-c}_A\le 1+\norm{a}_A$ is obtained at $c=-a/\norm{a}_A$, and equality of $1-\norm{a}_A\le\norm{a-c}_A$ at $c=a/\norm{a}_A$.
Strict convexity of $A$ implies that equality $\norm{a-c}_A=\norm{a}_A+\norm{c}_A$ is possible only when $a$ and $-c$ are positively collinear, and thus the maximizer is uniquely $c=-a/\norm{a}_A$.
If $\norm{a}_A=0$, every distance in question equals $2^{1/p}$.
\end{proof}

\begin{lemma}\label{lem:exponent}
For $i=1,2$, let $A_i$ be nonzero and strictly convex, let $B_i$ satisfy \textup{(D)}, and let $1<p_i<\infty$. Set
$X_i=A_i\oplus_{p_i}B_i$. 
If $\Delta:S_{X_1}\to S_{X_2}$ is a surjective isometry, then $p_1=p_2$.
Moreover,
\[
 \Delta(S_{A_1}\times\{0\})=S_{A_2}\times\{0\},\qquad
 \Delta(\{0\}\times S_{B_1})=\{0\}\times S_{B_2}.
\]
\end{lemma}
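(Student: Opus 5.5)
The proof combines Lemma~\ref{lem:singleton} with the distance profile of Lemma~\ref{lem:profile}. Since $\Delta$ is a surjective isometry between unit spheres, it maps $\diamset_{X_1}(x)$ bijectively onto $\diamset_{X_2}(\Delta(x))$. Hence $\Delta$ maps $P_{X_1}$ onto $P_{X_2}$. By Lemma~\ref{lem:singleton} this gives
\[
\Delta(S_{A_1}\times\{0\})=S_{A_2}\times\{0\}.
\]

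Next I characterize $\{0\}\times S_{B_i}$ intrinsically through distances to $P_{X_i}$. Fix $(a,b)\in S_{X_i}$ and write $s=\norm{a}_{A_i}$. By Lemma~\ref{lem:profile}, the set of distances from $(a,b)$ to points of $P_{X_i}$ has maximum $M(s)=\bigl((1+s)^{p_i}+1-s^{p_i}\bigr)^{1/p_i}$ and minimum $m(s)=\bigl((1-s)^{p_i}+1-s^{p_i}\bigr)^{1/p_i}$. The maximum and minimum coincide exactly when $s=0$. Indeed, $M$ is strictly increasing by the computation in Lemma~\ref{lem:singleton}, so $M(s)>2^{1/p_i}$ for $s>0$. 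Likewise $m(s)^{p_i}\le (1-s)+1-s^{p_i}<2$ for $s>0$, so $m(s)<2^{1/p_i}$.

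Thus $\{0\}\times S_{B_i}$ is exactly the set of points of $S_{X_i}$ whose distance to $P_{X_i}$ is constant. Since $\Delta$ is an isometry and $\Delta(P_{X_1})=P_{X_2}$, it preserves the function $x\mapsto\{\norm{x-q}:q\in P_{X_i}\}$. Consequently
\[
\Delta(\{0\}\times S_{B_1})=\{0\}\times S_{B_2}.
\]

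To compare the exponents, take any $(0,b)\in S_{X_1}$ and $(c,0)\in S_{X_1}$. Their distance is $2^{1/p_1}$. Their images lie in $\{0\}\times S_{B_2}$ and $S_{A_2}\times\{0\}$ respectively, so the image distance is $2^{1/p_2}$. Because $\Delta$ is an isometry, $2^{1/p_1}=2^{1/p_2}$, hence $p_1=p_2$.

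The main point requiring care is the strict inequality $m(s)<2^{1/p}$ for $s\in(0,1]$, together with checking that the unique constant-distance case is $s=0$. This is a one-line estimate using $(1-s)^p\le 1-s<1$ for $s\in(0,1]$ and $p>1$, with $s^p>0$.
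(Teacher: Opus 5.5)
Your proposal is correct and follows the paper's proof essentially step for step. You identify $P_{X_i}=S_{A_i}\times\{0\}$ through singleton diametral sets, characterize $\{0\}\times S_{B_i}$ as the points at constant distance from $P_{X_i}$ via Lemma~\ref{lem:profile}, and compare the distance $2^{1/p_i}$ between the two component spheres; your explicit check that the maximum and minimum coincide only when $s=0$ simply fills in a step the paper states without detail.
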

\begin{proof}
Since $\Delta:S_{X_1}\to S_{X_2}$ is a surjective isometry, for every $x\in S_{X_1}$ we have $\Delta\bigl(\mathcal D_{X_1}(x)\bigr)=\mathcal D_{X_2}(\Delta(x))$.
More precisely, preservation of distances gives one inclusion, and surjectivity gives the reverse inclusion.
Thus $\mathcal D_{X_1}(x)$ is a singleton if and only if $\mathcal D_{X_2}(\Delta(x))$ is a singleton.
By Lemma~3.1,
\[
 P_{X_i}=S_{A_i}\times\{0\}\qquad(i=1,2),
\]
and consequently $\Delta(S_{A_1}\times\{0\})=S_{A_2}\times\{0\}$.

For $i=1,2$, define
\[
 Q_{X_i}
 =\left\{x\in S_{X_i}:
   \text{the function }z\mapsto\|x-z\|_{X_i}
   \text{ is constant on }P_{X_i}\right\}.
\]
Because $\Delta$ preserves distances and maps $P_{X_1}$ onto $P_{X_2}$, it also maps $Q_{X_1}$ onto $Q_{X_2}$.

We claim that $Q_{X_i}=\{0\}\times S_{B_i}$.
To see this, let $(a,b)\in S_{X_i}$.
By Lemma~3.2,
\[
 \max_{c\in S_{A_i}}\|(a,b)-(c,0)\|_{X_i}^{p_i}
 =(1+\|a\|_{A_i})^{p_i}+1-\|a\|_{A_i}^{p_i},
\]
whereas
\[
 \min_{c\in S_{A_i}}\|(a,b)-(c,0)\|_{X_i}^{p_i}
 =(1-\|a\|_{A_i})^{p_i}+1-\|a\|_{A_i}^{p_i}.
\]
These maximum and minimum agree if and only if $\|a\|_{A_i}=0$.
Therefore, the distance to points of $P_{X_i}$ is constant exactly when $a=0$.
Since $(a,b)\in S_{X_i}$, $a=0$ implies $\|b\|_{B_i}=1$, proving the claim.
It follows that
\[
 \Delta(\{0\}\times S_{B_1})=\{0\}\times S_{B_2}.
\]

Finally, choose $a\in S_{A_1}$ and $b\in S_{B_1}$, which is possible because both spaces are nonzero.
Their images have the form
\[
 \Delta(a,0)=(a',0),\qquad
 \Delta(0,b)=(0,b'),
\]
with $a'\in S_{A_2}$ and $b'\in S_{B_2}$.
Preservation of their distance gives
\[
2^{1/p_1}=\|(a,0)-(0,b)\|_{X_1}=\|(a',0)-(0,b')\|_{X_2}=2^{1/p_2}.
\]
Because $p\mapsto 2^{1/p}$ is strictly decreasing, we conclude that $p_1=p_2$, completing the proof.
\end{proof}

\begin{proof}[\textbf{Proof of Theorem~\ref{thm:main}:}]
($\Longrightarrow$)
Assume that a surjective isometry $\Delta:S_{A_1\oplus_{p_1}B_1}\to S_{A_2\oplus_{p_2}B_2}$ exists.
Lemma~\ref{lem:exponent} gives $p_1=p_2=:p$ and $\Delta(S_{A_1}\times\{0\}) = S_{A_2}\times \{0\}$.
We write
\[
 \Delta(a_0,0)=\big(\phi(a_0),0\big),\qquad a_0\in S_{A_1},
\]
where $\phi:S_{A_1}\to S_{A_2}$ is a surjective isometry.
The last assertion of Lemma~\ref{lem:singleton} gives
\begin{equation}\label{eq:odd}
 \phi(-a_0)=-\phi(a_0),\qquad a_0\in S_{A_1}.
\end{equation}
Let $(a,b)\in S_{A_1\oplus_{p_1}B_1}$, and write $\Delta(a,b)=(a',b')$.
By equality~\eqref{eq:max}, the maximum distance to the first summand sphere determines $\norm{a}$ uniquely.
It follows that
\begin{equation}\label{eq:norm-preserve}
 \norm{a'}=\norm{a},\qquad \norm{b'}=\norm{b}.
\end{equation}
If $\norm{a}\neq 0$, by Lemma~\ref{lem:profile}, the unique maximizer $-a/\norm{a}$ is carried onto the unique maximizer $-a'/\norm{a'}$ for $(a',b')$.
Consequently, equalities~\eqref{eq:norm-preserve} yield
\[
 \phi(-a/\norm{a})=-a'/\norm{a}.
\]
Together with equality~\eqref{eq:odd}, this proves that the first coordinate of $\Delta(a,b)$ is $U_0(a)$, where
\begin{equation}\label{eq:radial}
 U_0(0)=0,\qquad
 U_0(a)=\norm{a}\phi(a/\norm{a}),\qquad 0\ne a\in B_{A_1}.
\end{equation}
For $a=0$ the same conclusion follows from equalities~\eqref{eq:norm-preserve}.
The definition~\eqref{eq:radial} shows that $U_0:B_{A_1}\to B_{A_2}$ is a norm-preserving bijection.
Its inverse is defined radially from $\phi^{-1}$, and the first coordinate of $\Delta^{-1}(c,d)$ is $U_0^{-1}(c)$.

We now extend $U_0$ to the whole space $A_1$.
Take $a,c\in B_{A_1}$.
Fix $w\in S_{B_1}$, and write
\[
 \Delta\big(a,(1-\norm{a}^p)^{1/p}w\big)=\big(U_0(a),v\big),\qquad
 \Delta\big(c,(1-\norm{c}^p)^{1/p}w\big)=\big(U_0(c),z\big).
\]
By equalities~\eqref{eq:norm-preserve}, $\norm{v}=(1-\norm{a}^p)^{1/p}$ and $\norm{z}=(1-\norm{c}^p)^{1/p}$.
Consequently,
\begin{align*}
 \norm{U_0a-U_0c}^p+\abs{(1-\norm{a}^p)^{1/p}-(1-\norm{c}^p)^{1/p}}^p
 &\leq\norm{U_0(a)-U_0(c)}^p+\norm{v-z}^p\\
 &=\norm{\big(a,(1-\norm{a}^p)^{1/p}w\big)-\big(c,(1-\norm{c}^p)^{1/p}w\big)}^p\\
 &=\norm{a-c}^p+\abs{(1-\norm{a}^p)^{1/p}-(1-\norm{c}^p)^{1/p}}^p.
\end{align*}
Thus $U_0$ is nonexpansive, namely, $\norm{U_0(a) - U_0(c)}\le\norm{a-c}$.
Applying precisely this argument to $\Delta^{-1}$ and a fixed unit vector in $B_2$ shows that $U_0^{-1}$ is nonexpansive. 
Hence $U_0$ is a surjective isometry between the closed balls. 
Theorem~\ref{thm:ball} provides a surjective real-linear isometry $U:A_1\to A_2$ extending $U_0$.

We next show that the second coordinate of $\Delta(a,b)$ is determined solely by $b$.
Let $(a,b),(c,b)\in S_{A_1\oplus_pB_1}$ and write their images as $\big(U(a),v\big)$ and $\big(U(c),z\big)$.
Since $U$ is an isometry, we have
\begin{align*}
\norm{a-c}^p&=\norm{(a,b) - (c,b)}^p\\
&=\norm{\big(U(a),v\big)- \big(U(c),z\big)}^p\\
&=\norm{U(a)-U(c)}^p+\norm{v-z}^p\\
&=\norm{a-c}^p+\norm{v-z}^p.
\end{align*}
Therefore $v=z$.
For every $b\in B_{B_1}$, choose $a\in A_1$ of norm $(1-\norm{b}^p)^{1/p}$.
We have thus defined a map $V_0:B_{B_1}\to B_{B_2}$ such that
\begin{equation}\label{eq:V0}
 \Delta(a,b)=\big(U(a),V_0(b)\big),
 \qquad(a,b)\in S_{A_1\oplus_{p}B_1}.
\end{equation}
Equation~\eqref{eq:norm-preserve} implies $V_0(0)=0$.

We now extend $V_0$ to the whole space $B_1$, and conclude the required statement.
For $b,d\in B_{B_1}$ choose $a,c$ so that $(a,b),(c,d)\in S_{A_1\oplus_pB_1}$.
Equation~\eqref{eq:V0} and the isometry of $\Delta$ give
\[
 \norm{a-c}^p+\norm{b-d}^p
 =\norm{U(a)-U(c)}^p+\norm{V_0(b)-V_0(d)}^p.
\]
Since $U$ is an isometry, the preceding equality yields $\norm{V_0(b)-V_0(d)}=\norm{b-d}$.
For $z\in B_{B_2}$ choose $c\in A_2$ such that $(c,z)$ lies on the target sphere.
A preimage under $\Delta$ shows that $z$ is in the range of $V_0$.
Hence $V_0$ is a surjective ball isometry, and we know that $V_0(0)=0$ from the previous paragraph.
Theorem~\ref{thm:ball} supplies a surjective real-linear isometry $V:B_1\to B_2$ extending $V_0$ such that
$$
 \Delta(a,b)=\big(U(a),V(b)\big),
 \qquad (a,b)\in S_{A_1\oplus_pB_1}.
$$
This proves formula~\eqref{eq:block}.

($\Longleftarrow$) Conversely, if $p_1=p_2=:p$ and $U,V$ are surjective real-linear isometries, as stated, then
$$\|(U\oplus V)(a,b)\|^p_{A_2\oplus_p B_2} = \norm{U(a)}^p_{A_2}+\norm{V(b)}^p_{B_2} = \norm{a}^p_{A_1}+\norm{b}^p_{B_1} = \norm{(a,b)}^p_{A_1\oplus_p B_1},$$
showing that $U\oplus V:A_1\oplus_pB_1\to A_2\oplus_pB_2$ is a surjective real-linear isometry.
Immediately, $(U\oplus V)|_{S_{A_1\oplus_p B_1}}$ is a surjective isometry. 
Finally, a real-linear extension of a sphere map is determined on each $x\neq0$ by radial formula~\eqref{eq:radial_for}.
Thus, Lemma~\ref{lem:radialunique} proves all uniqueness claims.
\end{proof}

\begin{remark}
Theorem~\ref{thm:main} does not assume finite dimensionality, smoothness, or a prior solution of Tingley's problem for $A_i$ or $B_i$. 
Its conclusion is nevertheless a theorem about pairs of spaces with the specified decompositions; it does not assert the Mazur--Ulam property with an arbitrary target Banach space.
\end{remark}

\begin{example}\label{ex:abstract}
Let $A$ be a nonzero strictly convex Banach space and let
$E,F$ be arbitrary nonzero Banach spaces. By
Proposition~\ref{prop:Dsums}, Theorem~\ref{thm:main} applies
to both
\[
 A\oplus_p(E\oplus_1F),\qquad
 A\oplus_p(E\oplus_\infty F),\qquad 1<p<\infty.
\]
Examples include $\R\oplus_p\ell_1^3$ and
$H\oplus_p(\ell_1\oplus_\infty L^2[0,1])$, where $H$ is a
nonzero Hilbert space. No strict convexity is required of
$E$ or $F$.
\end{example}

\section{Applications to function spaces}\label{sec:applications}

We first apply Theorem~\ref{thm:main} to absolutely continuous
functions whose norms combine evaluation at a fixed point with
the $L^1$-norm of the derivative. We then record shorter
applications to Lipschitz and differentiable functions, specifying
their relation to existing extension theorems.

Throughout this section, $I$ denotes a nondegenerate compact
interval, $x_0\in I$, and $\K\in\{\R,\C\}$. All extension
assertions concern the underlying real Banach spaces. For
$s,t\geq0$, put
\[
 N_p(s,t)=(s^p+t^p)^{1/p}\quad(1\leq p<\infty),
 \qquad N_\infty(s,t)=\max\{s,t\}.
\]
Integrals with reversed endpoints are understood as oriented
integrals. All $L^q$-spaces on intervals use the Lebesgue measure.

\subsection{Absolutely continuous functions}

Let $\AC^p(I,x_0;\K)$ denote $\AC(I,\K)$ equipped with
\begin{equation}\label{eq:ACnorm}
 \|f\|_{\AC,p,x_0}
 =N_p\bigl(|f(x_0)|,\|f'\|_1\bigr),
 \qquad 1\leq p\leq\infty.
\end{equation}
Here $p$ combines the two displayed quantities; the derivative belongs to $L^1(I,\K)$ for every $p$. 
The fundamental theorem for absolutely continuous functions gives the surjective $\K$-linear isometry
\begin{equation}\label{eq:ACdecomp}
 J_{\AC}:\AC^p(I,x_0;\K)\to\K\oplus_p L^1(I,\K),\qquad J_{\AC}(f)=(f(x_0),f'),
\end{equation}
whose inverse is
\begin{equation}\label{eq:primitive}
 J_{\AC}^{-1}(c,u)(x)=c+\int_{x_0}^x u(t)\,dt.
\end{equation}
In particular, $\AC^p(I,x_0;\K)$ is complete for all $1\le p\le\infty$.

\begin{lemma}\label{lem:Lspaces}
The spaces $L^1(I,\K)$ and $L^\infty(I,\K)$ satisfy condition~\textup{(D)}.
\end{lemma}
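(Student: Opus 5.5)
The plan is to reduce both claims to Proposition~\ref{prop:Dsums} by splitting the interval into two pieces. Since $I$ is nondegenerate, pick an interior point $m$ and put $I_1=I\cap(-\infty,m]$ and $I_2=I\cap[m,\infty)$. Both pieces have positive Lebesgue measure and they overlap only in a null set. Restriction then gives the map
\[
 u\longmapsto (u|_{I_1},u|_{I_2}).
\]
This map is a surjective linear isometry from $L^1(I,\K)$ onto $L^1(I_1,\K)\oplus_1L^1(I_2,\K)$, because the $L^1$-norm is additive over disjoint measurable pieces. The same map is a surjective linear isometry from $L^\infty(I,\K)$ onto $L^\infty(I_1,\K)\oplus_\infty L^\infty(I_2,\K)$, because the essential supremum over a union of two sets is the maximum of the two essential suprema. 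Surjectivity in both cases comes from gluing: a pair of functions defined on the two halves determines a single function a.e.\ on $I$.

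Each summand is nonzero, since $I_1$ and $I_2$ have positive measure and so their characteristic functions are nonzero elements. Proposition~\ref{prop:Dsums} therefore says that the two direct sums satisfy condition~\textup{(D)}.

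It remains to transport condition~\textup{(D)} across the isometry. If $T:E\to F$ is a surjective linear isometry, then $T$ maps $S_E$ bijectively onto $S_F$. It also satisfies $T(\diamset_E(x))=\diamset_F(Tx)$ for every $x\in S_E$. Consequently \textup{(D)} holds for $E$ exactly when it holds for $F$, which finishes the proof.

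The only step needing any care is the identification with the direct sums, in particular the null overlap at $m$ and the nonzero summands. Both are routine. As a sanity check, a direct argument is also available. For $L^1$, given a unit vector $f$, the functions $-f$ and $-\operatorname{sgn}(f)\,g$, with $g$ supported suitably, give two diametral points. The splitting argument avoids that casework, so I would use it.
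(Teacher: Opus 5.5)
Your proposal is correct and matches the paper's proof: split $I$ into two pieces of positive measure, identify the spaces with $\oplus_1$ and $\oplus_\infty$ sums via restriction, and apply Proposition~\ref{prop:Dsums}. The paper leaves implicit the transport of condition~\textup{(D)} across a surjective linear isometry; you state that step explicitly, and it is correct.
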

\begin{proof}
Partition $I$ into measurable sets $E,F$ of positive measure.
Restriction gives
\[
 L^1(I,\K)\cong L^1(E,\K)\oplus_1L^1(F,\K),
 \qquad
 L^\infty(I,\K)\cong
 L^\infty(E,\K)\oplus_\infty L^\infty(F,\K).
\]
Every component is nonzero, so Proposition~\ref{prop:Dsums} applies to both decompositions.
\end{proof}

\begin{proposition}\label{prop:AC}
Fix $\K\in\{\R,\C\}$. 
For $i=1,2$, let $I_i$ be a nondegenerate compact interval, $x_i\in I_i$, and $1<p_i<\infty$. 
Then there exists a surjective isometry $\Delta:S_{\AC^{p_1}(I_1,x_1;\K)}\to S_{\AC^{p_2}(I_2,x_2;\K)}$ if and only if $p_1=p_2$ and there exist surjective real-linear isometries $U:\K\to\K$ and $V:L^1(I_1,\K)\to L^1(I_2,\K)$.
Every such isometry $\Delta$ extends uniquely to a surjective real-linear isometry $\widetilde{\Delta}$ of the form
\begin{equation}\label{eq:ACform}
\widetilde{\Delta}(f)(x)=U\big(f(x_1)\big)+\int_{x_2}^xV(f')(t)\,dt,
 \qquad x\in I_2,
\end{equation}
where $U$ and $V$ are uniquely determined by $\Delta$.
\end{proposition}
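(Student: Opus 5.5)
The plan is to transport everything through the decomposition isometries $J_{\AC}$ of \eqref{eq:ACdecomp} and then apply Theorem~\ref{thm:main}. Write $J_i:\AC^{p_i}(I_i,x_i;\K)\to\K\oplus_{p_i}L^1(I_i,\K)$ for these maps. Each $J_i$ is a surjective $\K$-linear isometry, so it is in particular a surjective real-linear isometry of the underlying real spaces and maps unit spheres onto unit spheres. To apply Theorem~\ref{thm:main} with $A_i=\K$ and $B_i=L^1(I_i,\K)$, I need to check its hypotheses. First, $\K$, viewed as the real space $\R$ or $\R^2$ with the Euclidean norm, is nonzero and strictly convex. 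Second, $L^1(I_i,\K)$ satisfies condition \textup{(D)} by Lemma~\ref{lem:Lspaces}.

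For the equivalence, suppose first that a surjective sphere isometry $\Delta$ exists. Then $\Delta':=J_2\circ\Delta\circ J_1^{-1}|_{S}$ is a surjective isometry between the unit spheres of $\K\oplus_{p_1}L^1(I_1,\K)$ and $\K\oplus_{p_2}L^1(I_2,\K)$. Theorem~\ref{thm:main} then gives $p_1=p_2$ together with the isometries $U$ and $V$. Conversely, given $p_1=p_2=p$ and isometries $U$ and $V$, the map $U\oplus V$ is a surjective real-linear isometry. Hence $J_2^{-1}\circ(U\oplus V)\circ J_1$, restricted to the unit sphere, is the required $\Delta$.

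For the extension formula, take the unique $U$ and $V$ that Theorem~\ref{thm:main} attaches to $\Delta'$, so that $\Delta'(c,u)=(U(c),V(u))$ on the sphere. Define $\widetilde\Delta:=J_2^{-1}\circ(U\oplus V)\circ J_1$. This is a surjective real-linear isometry extending $\Delta$. Substituting $J_1(f)=(f(x_1),f')$ into the inverse formula \eqref{eq:primitive} on $I_2$ with base point $x_2$ yields exactly \eqref{eq:ACform}.

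Uniqueness of $\widetilde\Delta$ follows from Lemma~\ref{lem:radialunique}. For uniqueness of $U$ and $V$, suppose another pair $(U_1,V_1)$ also represents $\widetilde\Delta$ through \eqref{eq:ACform}. Conjugating by $J_i$ shows that $U_1\oplus V_1$ and $U\oplus V$ both extend $\Delta'$, so the uniqueness clause of Theorem~\ref{thm:main} gives $U_1=U$ and $V_1=V$. Alternatively, this can be read off directly: evaluating \eqref{eq:ACform} at $x=x_2$ recovers $U(f(x_1))$, and differentiating almost everywhere recovers $V(f')$.

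I do not expect a serious obstacle. The only points needing care are bookkeeping: making sure the real-linear structure on $\K$ is the one used by Theorem~\ref{thm:main}, and confirming that $J_i$ is isometric for the $N_p$ norm \eqref{eq:ACnorm} with $p=p_i$.
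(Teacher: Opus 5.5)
Your proposal is correct and takes the same route as the paper. You conjugate $\Delta$ by the decomposition isometries $J_{\AC}$ and apply Theorem~\ref{thm:main} with $A_i=\K$ (strictly convex as a real space) and $B_i=L^1(I_i,\K)$ (condition \textup{(D)} by Lemma~\ref{lem:Lspaces}), read off \eqref{eq:ACform} from \eqref{eq:primitive}, and get uniqueness from Lemma~\ref{lem:radialunique} and the uniqueness clause of Theorem~\ref{thm:main}; your explicit converse via $J_2^{-1}(U\oplus V)J_1$ and your check that $U,V$ are recovered from $\widetilde\Delta$ just spell out steps the paper leaves terse.
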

\begin{proof}
The underlying real space of $\K$ is strictly convex, and the second components in \eqref{eq:ACdecomp} satisfy
\textup{(D)} by Lemma~\ref{lem:Lspaces}. 
Conjugating $\Delta$ by the restrictions of the maps $J_{\AC}$ to the unit spheres and applying Theorem~\ref{thm:main}
gives $p_1=p_2$ and the unique component isometries $U,V$.
Formula~\eqref{eq:primitive} then gives~\eqref{eq:ACform}.
Conversely, since $L^1(I_1,\mathbb K)$ and $L^1(I_2,\mathbb K)$ are surjectively real-linearly isometric, the existence assertion follows from Theorem~\ref{thm:main}.
\end{proof}

The following representation makes the derivative component
in \eqref{eq:ACform} explicit. Put $\kappa_0(z)=z$ and,
for complex scalars, $\kappa_1(z)=\overline z$. A measurable
bijection modulo null sets means a bimeasurable bijection
between full-measure measurable subsets such that both the
bijection and its inverse preserve null sets.

\begin{corollary}\label{cor:ACexplicit}
Under the hypotheses of Proposition~\ref{prop:AC}, every
extension has the form
\begin{equation}\label{eq:ACexplicit}
 \widetilde{\Delta}(f)(x)=\omega\kappa_{\varepsilon_0}\Big(f(x_1)\Big)
 +\int_{x_2}^x\rho(t)h(t)\kappa_{\varepsilon(t)}\Big(f'(\tau(t))\Big)\,dt.
\end{equation}
Here $\tau:I_2\to I_1$ is a measurable bijection modulo null sets, and $\rho$ is measurable, positive and finite almost everywhere, with
\begin{equation}\label{eq:ACdensity}
 \int_{I_2}\rho(t)g\big(\tau(t)\big)\,dt
 =\int_{I_1}g(s)\,ds
\end{equation}
for every nonnegative measurable function $g$ on $I_1$.
Also, $|\omega|=1$, $\varepsilon_0\in\{0,1\}$, and $h:I_2\to\T$ and $\varepsilon:I_2\to\{0,1\}$ are measurable, where $\T=\{z\in\C:|z|=1\}$.
When $\mathbb K=\mathbb R$, we require $\omega\in\{-1,1\}$, $h(t)\in\{-1,1\}$ almost everywhere, and $\varepsilon_0=\varepsilon=0$.
\end{corollary}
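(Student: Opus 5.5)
By Proposition~\ref{prop:AC}, every extension has the form \eqref{eq:ACform}, with $U:\K\to\K$ and $V:L^1(I_1,\K)\to L^1(I_2,\K)$ surjective real-linear isometries. The plan is therefore to describe $U$ and $V$ separately and then substitute these descriptions into \eqref{eq:ACform}.

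\textbf{The scalar component $U$.} A surjective real-linear isometry of $\R$ is multiplication by $\pm1$. For $\C$ regarded as the Euclidean plane $\R^2$, a real-linear isometry is an orthogonal map, hence a rotation or a reflection. These are exactly $z\mapsto\omega z$ and $z\mapsto\omega\overline z$ with $|\omega|=1$. This gives $U=\omega\kappa_{\varepsilon_0}$, where $\omega\in\{-1,1\}$ and $\varepsilon_0=0$ in the real case.

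\textbf{The derivative component $V$.} I would invoke the classical Banach--Lamperti description of surjective isometries of $L^1$, in its real-linear form for complex scalars. Since $[0,1]$-type measure spaces are standard Lebesgue spaces, the measure-preserving $\sigma$-algebra isomorphism in Lamperti's theorem is induced by a point map. This yields a measurable bijection $\tau:I_2\to I_1$ modulo null sets with the stated properties. It also yields a unimodular measurable $h$ and a weight equal to the Radon--Nikodym derivative $\rho=d(\lambda\circ\tau)/d\lambda$, so that $V(u)(t)=\rho(t)h(t)\,u(\tau(t))$ in the complex-linear case. Taking $g=|u|$ in \eqref{eq:ACdensity} gives exactly $\|V u\|_1=\|u\|_1$, and $\rho>0$ a.e.\ because $\tau^{-1}$ preserves null sets.

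The real-linear complex case is the main obstacle, since Lamperti is usually stated for $\K$-linear maps. My first approach is the following.
\begin{enumerate}
\item Use that a real-linear surjective isometry of $L^1(\mu,\C)$ preserves disjointness of supports. This follows from the $L^1$ characterization: $\|u+v\|+\|u-v\|=2(\|u\|+\|v\|)$ forces disjoint supports, the same $L$-summand argument as in the real case, using strict convexity of $\C$ pointwise.
\item Deduce that $V$ acts as a measurable field of real isometries of $\C$. On each fibre this is $z\mapsto h(t)\kappa_{\varepsilon(t)}(z)$, with $\varepsilon$ measurable, being determined by where $V$ sends the constants $1$ and $i$ on small sets.
\end{enumerate}
Alternatively, I would cite the known description of real-linear isometries of vector-valued $L^1(\mu,E)$ with $E=\C$ strictly convex.

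Finally, substituting both descriptions into \eqref{eq:ACform} gives \eqref{eq:ACexplicit}. In the real case, $\kappa$ is the identity and $h$ takes values $\pm1$.
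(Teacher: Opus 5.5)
Your proposal is correct and is essentially the paper's argument: the paper describes $U$ exactly as you do, and for $V$ it directly cites Sourour's theorem for $L^1(\Omega,X)$ with $X=\K$ regarded as a strictly convex real space (your fallback option, which covers the real-linear complex case at once), then realizes the measure-algebra isomorphism by a point map via \cite[344C]{Fremlin}, and obtains \eqref{eq:ACdensity} by testing the isometry on $u=ge$ with $|e|=1$ rather than by defining $\rho$ as a Radon--Nikodym derivative; your disjointness-preservation route is a sound outline of that theorem in this special case, but all of the actual work sits in its step (2). One small slip: the set isomorphism in Lamperti's theorem preserves null sets, not measure (otherwise $\rho\equiv1$).
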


\begin{proof}
By Proposition~\ref{prop:AC}, we have
\[
\widetilde{\Delta}(f)(x) = U(f(x_1)) + \int_{x_2}^{x} V(f')(t)\,dt,
\]
where $U:\mathbb K\to\mathbb K$ and $V:L^1(I_1,\mathbb K)\to L^1(I_2,\mathbb K)$ are surjective real-linear isometries.
Every surjective real-linear isometry of $\mathbb K$ has the form
\[
U(z)=\omega\kappa_{\varepsilon_0}(z),
\]
where $|\omega|=1$, $\varepsilon_0\in\{0,1\}$, $\kappa_0(z)=z$, and $\kappa_1(z)=\overline z$.
In the real case, $\omega\in\{-1,1\}$ and $\varepsilon_0=0$.

We next describe $V$.
Regard $\K$ as a real Banach space, namely $\R$ or the Euclidean plane $\R^2$.
In either case, $\K$ is separable and strictly convex, and hence cannot be expressed as an $\ell^1$-direct sum of two nonzero Banach spaces. 
Therefore Sourour's representation theorem~\cite{Sourour} applies to $V$.
The two intervals may first be identified by an affine bijection, together with the corresponding normalization of the $L^1$-norm.
Sourour’s theorem yields an isomorphism between the underlying measure algebras, a positive measurable weight, and a measurable family of surjective real-linear isometries of $\K$.

Since $I_1$ and $I_2$, equipped with the Lebesgue measure, are standard nonatomic finite measure spaces, the measure-algebra isomorphism can be represented by a measurable bijection $\tau:I_2\to I_1$ modulo null sets~\cite[344C]{Fremlin}.
Consequently, there exist a measurable function $\rho:I_2\to(0,\infty)$, finite almost everywhere, and a measurable family of surjective real-linear isometries $T_t:\K\to\K$ such that
\[
V(u)(t)=\rho(t)T_t\big(u(\tau(t))\big)
\]
for every $u\in L^1(I_1,\K)$ and almost every $t\in I_2$.

Each $T_t$ is multiplication by a sign when $\K=\R$.
When $\K=\C\simeq\R^2$, regarded as a real Banach space, every such isometry is either multiplication by a unimodular scalar or multiplication by a unimodular scalar followed by complex conjugation.
Thus there exist measurable functions $h:I_2\to\T$ and $\varepsilon:I_2\to\{0,1\}$ such that
\[
T_t(z)
=
h(t)\kappa_{\varepsilon(t)}(z)
\]
for almost every $t\in I_2$.
In the real case, $h(t)\in\{-1,1\}$ almost everywhere and $\varepsilon=0$.
Hence
\[
V(u)(t)
=
\rho(t)h(t)
\kappa_{\varepsilon(t)}
\bigl(u(\tau(t))\bigr)
\quad\text{a.e.}
\]

It remains to verify equality~\eqref{eq:ACdensity}.
Let $g$ be a nonnegative integrable function on $I_1$, and choose a fixed $e\in\K$ with $|e|=1$. 
Applying the preceding representation to $u(s)=g(s)e$, and using the fact that $V$ and each $T_t$ are isometries, we obtain
\[
\begin{aligned}
\int_{I_1}g(s)\,ds=\|u\|_1=\|V(u)\|_1=\int_{I_2}\rho(t)
  \left|
  T_t\bigl(g(\tau(t))e\bigr)
  \right|\,dt=\int_{I_2}\rho(t)g(\tau(t))\,dt.
\end{aligned}
\]
By applying this identity to truncations and using the monotone convergence theorem, the equality extends to every nonnegative measurable function $g$ on $I_1$. 
This proves equality~\eqref{eq:ACdensity}.
Finally, substituting the representations of $U$ and $V$ into formula~\eqref{eq:ACform} yields~\eqref{eq:ACexplicit}, completing the proof.
\end{proof}

\begin{remark}\label{rem:ACcomparison}
Proposition~\ref{prop:AC} uses condition~\textup{(D)} of the $L^1$-component directly. Wang's~\cite[Theorem~1.8]{WangLp} is formulated for specified $C_0(\Omega,E)$-type components and a common exponent.
The applicability of Wang’s theorem to an $L^1$-component does not follow merely from the identity $L^1(I,\K)\cong C_0\big(\{*\},L^1(I,\K)\big)$:
the singleton component in that theorem is required to be strictly convex, whereas $L^1(I,\K)$ is not strictly convex.
The representation in Corollary~\ref{cor:ACexplicit} uses the existing classification of linear $L^1$-isometries; the extension assertion is supplied by Theorem~\ref{thm:main}.
\end{remark}

The following proposition establishes that $\AC^1(I,x_0 ;\R)$ has the Mazur--Ulam property.
\begin{proposition}\label{prop:ACone}
The real Banach space $\AC^1(I,x_0;\R)$ has the Mazur--Ulam property.
\end{proposition}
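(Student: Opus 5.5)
The plan is to identify $\AC^1(I,x_0;\R)$ isometrically with a real $L^1$-space and then invoke Tan's theorem~\cite{Tan}, which states that every real $L^1(\mu)$-space has the Mazur--Ulam property. The Mazur--Ulam property transfers along surjective linear isometries. Indeed, if $J:X\to Y$ is such an isometry and $\Delta:S_X\to S_Z$ is a surjective isometry onto the unit sphere of an arbitrary real Banach space $Z$, then $\Delta\circ J^{-1}|_{S_Y}$ is a surjective sphere isometry from $S_Y$. It therefore extends to a surjective real-linear isometry $T:Y\to Z$, and $T\circ J$ extends $\Delta$.

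First, by~\eqref{eq:ACdecomp} with $p=1$ and $\K=\R$, the map $J_{\AC}$ is a surjective linear isometry of $\AC^1(I,x_0;\R)$ onto $\R\oplus_1 L^1(I,\R)$. Second, I adjoin an atom. Let $\mu$ be the measure on the disjoint union $\Omega=I\sqcup\{*\}$ that is Lebesgue measure on $I$ and gives the point $*$ mass $1$. The map $(c,u)\mapsto u+c\,\chi_{\{*\}}$ is then a surjective linear isometry $\R\oplus_1L^1(I,\R)\to L^1(\Omega,\mu;\R)$, since
\[
\int_\Omega\abs{u+c\chi_{\{*\}}}\,d\mu=\abs{c}+\norm{u}_1 .
\]
Surjectivity holds because every $\mu$-integrable function splits into its value at $*$ and its restriction to $I$. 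The measure $\mu$ is finite, indeed $\sigma$-finite, so Tan's theorem applies in whatever generality it is stated. Third, I compose these two isometries and apply the transfer observation above.

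The main obstacle is only bookkeeping: I must check that Tan's result covers measure spaces with atoms, here a finite measure space, and that the Mazur--Ulam property is used in the sense of an arbitrary target space. Since $\mu$ is a finite measure, this should pose no difficulty. In contrast with Theorem~\ref{thm:main}, no exponent comparison is needed, because the target is arbitrary.
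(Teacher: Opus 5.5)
Your proposal is correct and matches the paper's proof: the paper also adjoins an atom to identify $\AC^1(I,x_0;\R)$ with $L^1(I\sqcup\{*\},m+\delta_*;\R)$ via $f\mapsto f(x_0)\chi_{\{*\}}+f'\chi_I$, which is your composition of $J_{\AC}$ with the atom map. It then applies Tan's theorem for this finite measure and composes the resulting extension with the identification, exactly as in your transfer argument. The only extra point in the paper is uniqueness of the extension via Lemma~\ref{lem:radialunique}, which the Mazur--Ulam property itself does not require.
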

\begin{proof}
Let $\widetilde I=I\sqcup\{*\}$ and $\mu=m+\delta_*$, where $m$ is the Lebesgue measure and $\delta_\ast$ is the Dirac measure at $\{\ast\}$.
The map
\[
Q(f):=f(x_0)\chi_{\{\ast\}}+f'\chi_I,\qquad f\in \AC^1(I,x_0;\R)
\]
is a surjective real-linear isometry onto $L^1(\widetilde I,\mu;\R)$ by formula \eqref{eq:primitive}.
Since $\mu$ is finite, Tan's theorem~\cite{Tan} applies.
Let $\Delta:S_{\AC_1(I,x_0;\R)}\to S_Y$ be a surjective isometry, where $Y$ is a real Banach space.
Then $\Delta\circ Q^{-1}$ extends to a surjective real-linear isometry 
$$R:L^1(\widetilde I,\mu;\R)\to Y.$$
The operator $R\circ Q$ extends $\Delta$.
Uniqueness follows from Lemma~\ref{lem:radialunique}.
\end{proof}

Notice that the endpoint conclusion in Proposition~\ref{prop:ACone} is a consequence of the existing
$L^1$ extension theorem.

\subsection{Lipschitz functions}

For $f\in\Lip(I,\K)$, define
\[
 L(f):=\sup_{s\ne t}\frac{|f(s)-f(t)|}{|s-t|},
 \qquad
 \|f\|_{\Lip,p,x_0}:=N_p(|f(x_0)|,L(f)),
\]
and denote the resulting space by $\Lip^p(I,x_0;\K)$.
Since $I$ is a compact interval, $L(f)=\|f'\|_{L^\infty}$.
Consequently, the map
\begin{equation}\label{eq:Lipdecomp}
 J_{\Lip}(f):=(f(x_0),f'):\Lip^p(I,x_0;\K)\to\K\oplus_p L^\infty(I,\K)
\end{equation}
is a surjective $\K$-linear isometry, with inverse
$(c,u)\mapsto c+\int_{x_0}^{\,\cdot}u(t)\,dt$.

\begin{proposition}\label{prop:Lip}
Fix $\K\in\{\R,\C\}$. 
Suppose $1<p_i<\infty$.
Then $S_{\Lip^{p_1}(I_1,x_1;\K)}$ is surjectively isometric to $S_{\Lip^{p_2}(I_2,x_2;\K)}$ if and only if $p_1=p_2=:p$.
For every common exponent $1\leq p\leq\infty$, every surjective isometry $\Delta:S_{\Lip_p(I_1,x_1;\K)}\to S_{\Lip_p(I_2,x_2;\K)}$ extends uniquely to a surjective real-linear isometry $\widetilde{\Delta}$ of the form
\begin{equation}\label{eq:Lipform}
\widetilde{\Delta}(f)(x)=U\big(f(x_1)\big)+\int_{x_2}^xV(f')(t)\,dt,
\end{equation}
where $U:\K\to\K$ and $V:L^\infty(I_1,\K)\to L^\infty(I_2,\K)$ are surjective real-linear isometries uniquely determined by $\Delta$.
\end{proposition}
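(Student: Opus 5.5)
The plan is to transport the whole problem through the isometry $J_{\Lip}$ of \eqref{eq:Lipdecomp}, which identifies $\Lip^p(I_i,x_i;\K)$ with $\K\oplus_p L^\infty(I_i,\K)$. The proof then splits into three exponent regimes: $1<p<\infty$, $p=\infty$ and $p=1$.

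\textbf{Case $1<p_i<\infty$.} Here Theorem~\ref{thm:main} applies directly. The underlying real space of $\K$ is $\R$ or Euclidean $\R^2$, hence nonzero and strictly convex. Lemma~\ref{lem:Lspaces} shows that $L^\infty(I_i,\K)$ satisfies \textup{(D)}. Conjugating a surjective sphere isometry by the restrictions of $J_{\Lip}$ to the unit spheres, Theorem~\ref{thm:main} gives $p_1=p_2$ and unique component isometries $U$ and $V$.

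For the converse we need a surjective real-linear isometry $L^\infty(I_1,\K)\to L^\infty(I_2,\K)$. We use the composition operator with the increasing affine bijection $I_2\to I_1$, which preserves null sets. Then $U=\mathrm{id}$, $V$ as above and $J_{\Lip}^{-1}$ give a sphere isometry. Applying the inverse formula $(c,u)\mapsto c+\int_{x_0}^{\cdot}u$ turns $U\oplus V$ into \eqref{eq:Lipform}.

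\textbf{Case $p=\infty$.} Now $\K\oplus_\infty L^\infty(I,\K)$ is $L^\infty$ of the measure space $I$ with an adjoined atom. This is isometrically $C(K,\K)$ for a compact Hausdorff $K$: take the Stone (Gelfand) space $K_0$ of $L^\infty(I)$ and set $K=K_0\sqcup\{*\}$. Wang's continuous-function extension theorem~\cite{WangContinuous} then extends $\Delta$ to a surjective real-linear isometry $T$ of the ambient spaces.

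It remains to show that $T$ preserves the decomposition. Theorem~\ref{thm:main} is not available here, so this needs a separate argument. The natural route is the Banach--Stone-type description of real-linear isometries of $C(K,\K)$: $T$ is a weighted composition with a homeomorphism of $K$, possibly with conjugation on a clopen set. A homeomorphism must send the isolated point $*$ to an isolated point. Since $K_0$ has no isolated points (Lebesgue measure is nonatomic), $*$ is the only isolated point of $K$. Hence $*$ is fixed, and $T$ splits as $U\oplus V$.

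\textbf{Case $p=1$.} Now $\K\oplus_1 L^\infty(I,\K)$ is an $\ell^1$-sum. Here I would invoke the $\ell^1$-sum theorem of Wang and Orihara~\cite{WangOrihara}, viewing $\K=C(\{*\},\K)$ and $L^\infty\cong C(K_0,\K)$. That theorem yields an extension which respects the summands up to a permutation. A summand permutation is impossible: $\K$ is strictly convex of real dimension at most $2$, while $L^\infty$ is infinite-dimensional.

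\textbf{Uniqueness and the main obstacle.} In all cases, uniqueness of $\widetilde\Delta$ comes from Lemma~\ref{lem:radialunique}. Uniqueness of $U$ and $V$ follows because they are recovered from $J_{\Lip}\widetilde\Delta J_{\Lip}^{-1}$.

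I expect the endpoints to be the main obstacle. We must check carefully that the cited theorems apply with the right hypotheses, in particular to $C(K,\K)$ over real scalars for $\K=\C$. We must also check that the resulting extension really is block-diagonal. The isolated-point argument at $p=\infty$ and the summand-matching at $p=1$ are the steps I would verify first.
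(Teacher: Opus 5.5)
Your proposal is correct and follows the paper's proof essentially step for step. For $1<p<\infty$ you use Theorem~\ref{thm:main} with Lemma~\ref{lem:Lspaces}, plus an affine composition operator for the converse; for $p=\infty$ you use the identification with $C(K_i\sqcup\{*\},\K)$, the real Banach--Stone theorem and the uniqueness of the isolated point $*$; and for $p=1$ you use the Wang--Orihara $\ell^1$-sum theorem.

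The hypothesis checks you flagged are the ones the paper makes explicit. For $\K=\C$ at $p=\infty$ it invokes Mori--Ozawa, since complex $C(K)$ is a unital $C^*$-algebra, alongside Wang's real result. At $p=1$ it works with the underlying real space $G$ of $\K$ and notes that neither $C(\{*\},G)$ nor $C(K_i,G)$ is isometric to real $\ell_\infty^2$; this is what makes the Wang--Orihara component representation available.
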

\begin{proof}
For $1<p_i<\infty$, apply Theorem~\ref{thm:main} to~\eqref{eq:Lipdecomp}, using Lemma~\ref{lem:Lspaces}.
Existence for equal exponents follows by taking $U$ to be the identity and $V(u)=u\circ\tau$, where $\tau:I_2\to I_1$ is an increasing affine bijection.

For the endpoints, write $L^\infty(I_i,\K)\cong C(K_i,\K)$ using its scalar continuous-function representation. 
For $p=1$, apply~\cite[Theorem~3.5]{WangOrihara} to $C(\{*\},G)\oplus_1 C(K_i,G)$, where $G$ is the underlying real space of $\K$. 
Neither component is isometric to real $\ell_\infty^2$: $G$ is strictly convex, and the other component is infinite-dimensional.
The component representation in~\cite[Theorems~3.3 and~3.4]{WangOrihara} therefore applies.
The finite-dimensional component must be preserved, giving \eqref{eq:Lipform}.

For $p=\infty$, the spaces identify with $C(K_i\sqcup\{*\},\K)$.
The scalar continuous-function extension results \cite{WangContinuous,MoriOzawa} give a real-linear extension.
The compact space $K_i$ has no isolated points, since the Lebesgue measure algebra has no atoms.
The real Banach--Stone theorem~\cite{FlemingJamison} consequently preserves the unique isolated point $*$, again giving \eqref{eq:Lipform}.
\end{proof}

For a common $1<p<\infty$, the extension and component representation above also follow from
\cite[Theorem~1.8]{WangLp}, since $L^\infty(I_i,\K)\cong C(K_i,\K)$.
Theorem~\ref{thm:main} additionally compares possibly different exponents.
The endpoint assertions were obtained from the cited theorems.
In particular, the complex $p=1$ extension problem with norm $|f(0)|+\|f'\|_{L^\infty}$ was studied by Hirota and Miura \cite{HirotaMiuraLip}; see also \cite{CabezasEtAl}.

\begin{remark}[Explicit Lipschitz extensions]\label{rem:Lipexplicit}
By the real Banach--Stone representation of the $L^\infty$ component and \cite[344C]{Fremlin}, formula~\eqref{eq:Lipform} can be rewritten as
\begin{equation}\label{eq:LIPexp}
 \widetilde{\Delta}(f)(x)=\omega\kappa_{\varepsilon_0}\Big(f(x_1)\Big)
 +\int_{x_2}^x h(t)
       \kappa_{\varepsilon(t)}\Big(f'(\tau(t))\Big)\,dt.
\end{equation}
The symbols $\omega,\varepsilon_0,h,\varepsilon,\tau$ have the meanings specified in Corollary~\ref{cor:ACexplicit}.
Every such choice gives a surjective real-linear isometry.
In fact, the null-set preservation in both directions implies $\|u\circ\tau\|_\infty=\|u\|_\infty$, and the pointwise scalar isometries and $\tau$ are invertible modulo null sets.
The representation~\eqref{eq:LIPexp} contains no density factor, since the derivative norm is the essential supremum norm.
\end{remark}

\subsection{Continuous and differentiable functions}
We first verify condition \textup{(D)} for the continuous-function components needed for the application: every $C(K,E)$ with $K$ compact Hausdorff, $|K|\geq2$, and $E\ne\{0\}$ satisfies \textup{(D)}. 
To verify condition \textup{(D)}, let $u\in S_{C(K,E)}$, choose $t_0$ with $\|u(t_0)\|=1$, a point $t_1\ne t_0$, and $\eta\in C(K,[0,1])$ with $\eta(t_0)=0$, $\eta(t_1)=1$.
Writing $e=u(t_0)$, the distinct functions
\[
 v_1(t)=-e,\qquad v_2(t)=-(1-\eta(t))e
\]
belong to $S_{C(K,E)}$ and satisfy $\|u-v_1\|_\infty=\|u-v_2\|_\infty=2$.

Let $E$ be a nonzero real or complex Banach space and $n\geq1$.
Equip $C^n(I,E)$ with
\begin{equation}\label{eq:anchored}
 \|f\|_{n,p,x_0}=
 \begin{cases}
 \displaystyle\left(\sum_{j=0}^{n-1}
       \|f^{(j)}(x_0)\|_E^p+\|f^{(n)}\|_\infty^p
       \right)^{1/p},&1\leq p<\infty,\\[6pt]
 \displaystyle\max\left\{
       \max_{0\leq j<n}\|f^{(j)}(x_0)\|_E,
       \|f^{(n)}\|_\infty\right\},&p=\infty,
 \end{cases}
\end{equation}
and write $C_p^n(I,x_0;E)$ for the normed space $(C^n(I,E), \norm{\cdot}_{n,p,x_0})$. 
Derivatives at endpoints are understood as one-sided derivatives.

\begin{proposition}\label{prop:jet}
For $1\leq p\leq\infty$, $n\ge1$, the map
\[
 J(f):=\Big((f^{(j)}(x_0))_{j=0}^{n-1},f^{(n)}\Big)
\]
is a surjective real-linear isometry from $C_p^n(I,x_0;E)$ onto $\ell_p^n(E)\oplus_p C(I,E)$. Its inverse is
\begin{equation}\label{eq:Taylor}
 J^{-1}(a,u)(x)=\sum_{j=0}^{n-1}
 a_j\frac{(x-x_0)^j}{j!}
 +\frac1{(n-1)!}\int_{x_0}^x(x-t)^{n-1}u(t)\,dt.
\end{equation}
In particular, $C_p^n(I,x_0;E)$ is a Banach space.
\end{proposition}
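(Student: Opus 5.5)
The plan is to verify directly that $J$ and the map $T$ defined by the right-hand side of \eqref{eq:Taylor} are mutually inverse real-linear maps, and that $J$ preserves norms. Real-linearity of $J$ is clear, since differentiation and evaluation at $x_0$ are linear. Moreover, $J$ takes values in $\ell_p^n(E)\oplus_p C(I,E)$ because $f\in C^n(I,E)$ forces $f^{(n)}\in C(I,E)$. The isometry property is then immediate from \eqref{eq:anchored}. We take the norm on $\ell_p^n(E)\oplus_p C(I,E)$ to be $N_p$ of $\bigl(\sum_j\|a_j\|^p\bigr)^{1/p}$ and $\|u\|_\infty$, with the usual maximum for $p=\infty$. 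Expanding the outer $p$-sum gives exactly $\sum_{j<n}\|a_j\|^p+\|u\|_\infty^p$, so $\|J(f)\|=\|f\|_{n,p,x_0}$. In particular $J$ is injective.

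For surjectivity, fix $(a,u)$ with $u\in C(I,E)$ and put $g=T(a,u)$. I would show by induction on $k$ that, for $0\le k\le n$,
\[
 g^{(k)}(x)=\sum_{j=k}^{n-1}a_j\frac{(x-x_0)^{j-k}}{(j-k)!}
 +\frac1{(n-1-k)!}\int_{x_0}^x(x-t)^{n-1-k}u(t)\,dt,
\]
where for $k=n$ the integral term is read as $u(x)$. The induction step has two parts. The polynomial part differentiates termwise. For the integral part with $n-1-k\ge1$, differentiation under the integral sign is legitimate because the integrand vanishes at $t=x$, so the boundary term drops. The final step, $\frac{d}{dx}\int_{x_0}^x u(t)\,dt=u(x)$, is the fundamental theorem of calculus for continuous $E$-valued Riemann (or Bochner) integrals, and it holds with one-sided derivatives at the endpoints of $I$. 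Evaluating at $x_0$ then gives $g^{(j)}(x_0)=a_j$ for $j<n$ and $g^{(n)}=u$, so $g\in C^n(I,E)$ and $J(g)=(a,u)$. Hence $J$ is bijective and $T=J^{-1}$.

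The only step needing care, and the expected obstacle, is this vector-valued calculus. One must justify differentiating $x\mapsto\int_{x_0}^x(x-t)^mu(t)\,dt$ with $E$-valued continuous $u$. I would do it either by expanding $(x-t)^m$ binomially, which reduces everything to products of polynomials in $x$ with primitives $\int_{x_0}^x t^iu(t)\,dt$ differentiated by the fundamental theorem, or by the Leibniz rule for continuous integrands. Alternatively, uniqueness can be checked via Taylor's formula with integral remainder applied to $f\in C^n$. Finally, completeness follows because $\ell_p^n(E)\oplus_p C(I,E)$ is complete whenever $E$ is, and a normed space surjectively isometric to a Banach space is Banach. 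Here $E$ is assumed Banach, so completeness holds.
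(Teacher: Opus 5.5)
Your proof is correct and follows the paper's argument essentially step for step. It uses the same induction formula for $g^{(k)}$, obtained by differentiating under the integral sign (the boundary term vanishes) and then applying the fundamental theorem of calculus, and it transfers completeness through the isometry. The one difference is injectivity, which you read off from norm preservation; this tacitly uses definiteness of $\|\cdot\|_{n,p,x_0}$, and that is exactly what the paper proves directly by applying $f^{(k)}(x)=f^{(k)}(x_0)+\int_{x_0}^x f^{(k+1)}(t)\,dt$ successively, i.e.\ the Taylor-remainder check you list as an alternative.
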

\begin{proof}
For $u\in C(I,E)$ and $m\geq1$, define the Bochner integral
\[
 R_m u(x)=\frac1{(m-1)!}
          \int_{x_0}^x(x-t)^{m-1}u(t)\,dt.
\]
The fundamental theorem of calculus gives $(R_1u)'=u$.
For $m\geq2$, differentiation under the integral sign gives $(R_m u)'=R_{m-1}u$; the boundary term vanishes.
These identities hold with one-sided derivatives at the endpoints, and $R_m u(x_0)=0$ for every $m\geq1$.
Thus the function $F$ on the right of~\eqref{eq:Taylor} belongs to $C^n(I,E)$ and satisfies
\[
 F^{(k)}(x)=\sum_{j=k}^{n-1}
 a_j\frac{(x-x_0)^{j-k}}{(j-k)!}+R_{n-k}u(x)
 \quad(0\leq k<n),
 \qquad F^{(n)}=u.
\]
Consequently, $F^{(k)}(x_0)=a_k$ and $J(F)=(a,u)$.
Hence, $J$ is surjective.
If $J(f)=0$, successive applications of
\[
 f^{(k)}(x)=f^{(k)}(x_0)
             +\int_{x_0}^x f^{(k+1)}(t)\,dt
\]
for $k=n-1,\ldots,0$ give $f=0$, proving injectivity.
Linearity is immediate, and the norm~\eqref{eq:anchored} gives $\|J(f)\|=\|f\|_{n,p,x_0}$, including the maximum norm when $p=\infty$. 
Completeness follows from completeness of the two components.
\end{proof}

\begin{corollary}\label{cor:vector}
For $i=1,2$, let $E_i$ be nonzero strictly convex Banach
spaces, $n_i\geq1$, and $1<p_i<\infty$. 
Then there exists a surjective isometry $\Delta:S_{C_{p_1}^{n_1}(I_1,x_1;E_1)}\to S_{C_{p_2}^{n_2}(I_2,x_2;E_2)}$ if and only if $p_1=p_2=:p$ and there exist surjective real-linear isometries $U:\ell_p^{n_1}(E_1)\to\ell_p^{n_2}(E_2)$ and
$V:C(I_1,E_1)\to C(I_2,E_2)$.
Moreover, every such surjective isometry $\Delta$ extends uniquely to
\begin{equation}\label{eq:vectorT}
 \widetilde{\Delta}:=J_2^{-1}(U\oplus V)J_1,
\end{equation}
where $U$ and $V$ are uniquely determined by $\Delta$.
For a fixed scalar field $E_1=E_2=\K$, such $\Delta$ exists if and only if $(n_1,p_1)=(n_2,p_2)$.
\end{corollary}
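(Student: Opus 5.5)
The plan is to transport the problem through the jet isometries of Proposition~\ref{prop:jet} and then apply Theorem~\ref{thm:main}. For $i=1,2$ let
\[
J_i:C_{p_i}^{n_i}(I_i,x_i;E_i)\to \ell_{p_i}^{n_i}(E_i)\oplus_{p_i}C(I_i,E_i)
\]
be the surjective real-linear isometry of Proposition~\ref{prop:jet}. Its restriction to the unit sphere is a surjective isometry of unit spheres. Hence a surjective isometry $\Delta$ between the unit spheres of the $C^n$-spaces exists if and only if $\Delta':=J_2\Delta J_1^{-1}$ is a surjective isometry between the unit spheres of the direct sums. We apply Theorem~\ref{thm:main} with
\[
A_i=\ell_{p_i}^{n_i}(E_i),\qquad B_i=C(I_i,E_i).
\]
Condition~\textup{(D)} for $B_i$ was verified at the start of this subsection, since a nondegenerate interval $I_i$ has at least two points and $E_i\neq\{0\}$. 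Each $A_i$ is nonzero because $n_i\ge1$.

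The one point that needs an argument is that $A_i$ is strictly convex. I will use the characterization recalled in Section~2. Let $x=(x_j),y=(y_j)$ be nonzero with $\norm{x+y}=\norm{x}+\norm{y}$. Chaining the coordinatewise triangle inequality with Minkowski's inequality in $\ell_p^n(\R)$ gives
\[
\norm{x+y}\le\bigl\|(\norm{x_j}+\norm{y_j})_j\bigr\|_p\le\norm{x}+\norm{y}.
\]
So both steps are equalities. Equality in Minkowski's inequality for $1<p<\infty$ forces $(\norm{y_j})_j=\lambda(\norm{x_j})_j$ with $\lambda=\norm{y}/\norm{x}>0$. Equality in the first step forces $\norm{x_j+y_j}=\norm{x_j}+\norm{y_j}$ for each $j$. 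By strict convexity of $E_i$, whenever $x_j\ne0$ we get $y_j=\lambda x_j$, and when $x_j=0$ we get $y_j=0$ as well. Thus $y=\lambda x$, so $x/\norm{x}=y/\norm{y}$. I expect this Minkowski-equality bookkeeping to be the only mildly delicate step; everything else is formal.

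Theorem~\ref{thm:main} then gives the following. A map $\Delta'$ exists if and only if $p_1=p_2=:p$ and surjective real-linear isometries $U:A_1\to A_2$ and $V:B_1\to B_2$ exist. Moreover, every $\Delta'$ extends uniquely to $U\oplus V$ with $U,V$ unique. Conjugating back, $\widetilde\Delta:=J_2^{-1}(U\oplus V)J_1$ is a surjective real-linear isometry extending $\Delta$. Its uniqueness, and that of $U,V$, follow from Lemma~\ref{lem:radialunique} together with the uniqueness part of Theorem~\ref{thm:main}, since $J_i$ are fixed bijections. For the converse direction, if $p_1=p_2$ and $U,V$ exist, then $J_2^{-1}(U\oplus V)J_1$ is a surjective linear isometry, and its restriction to the unit sphere provides $\Delta$.

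For the scalar case $E_1=E_2=\K$, suppose $\Delta$ exists. Then $p_1=p_2$, and $U:\ell_p^{n_1}(\K)\to\ell_p^{n_2}(\K)$ is a real-linear bijection between finite-dimensional real spaces of dimensions $n_1\dim_\R\K$ and $n_2\dim_\R\K$. Hence $n_1=n_2$. Conversely, assume $(n_1,p_1)=(n_2,p_2)$. Take $U=\mathrm{id}$ and $V(u)=u\circ\tau$ with $\tau:I_2\to I_1$ an increasing affine bijection. This $V$ is a surjective linear isometry $C(I_1,\K)\to C(I_2,\K)$, so the equivalence already proved yields $\Delta$.
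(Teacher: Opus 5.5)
Your proposal is correct and follows the paper's route. You transport through the jet isometries $J_i$ of Proposition~\ref{prop:jet}, apply Theorem~\ref{thm:main} with $A_i=\ell_{p_i}^{n_i}(E_i)$ and $B_i=C(I_i,E_i)$, and settle the scalar case by comparing real dimensions and using a composition operator. The differences are cosmetic: you prove strict convexity of the finite $\ell^p$-sum via the equality case of Minkowski's inequality, which the paper only asserts, and you take $\tau$ affine rather than an arbitrary homeomorphism.
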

\begin{proof}
For $1<p<\infty$, the finite $\ell^p$-sum of strictly convex spaces is strictly convex.
Apply Theorem~\ref{thm:main} through Proposition~\ref{prop:jet}, using condition~\textup{(D)} of $C(I_i,E_i)$.

In the scalar case, the real dimensions of the first components give $n_1=n_2$.
Conversely, if $(n_1,p_1)=(n_2,p_2)$, take $U$ to be the identity and define $V(u)=u\circ\tau$, where $\tau:I_2\to I_1$ is a homeomorphism.
Formula~\eqref{eq:vectorT} gives the required isometry.
\end{proof}

For a common exponent $1<p<\infty$, Corollary~\ref{cor:vector} is already covered by \cite[Theorem~1.8]{WangLp}.
More generally, Wang's Theorem 1.8 covers $A\oplus_p C(K,E)$ when $A$ is strictly convex and $|K|\geq2$, without requiring strict convexity of $E$.
The additional value-space assumptions in Wang's Theorem~1.10 concern pointwise representations.

\begin{remark}[Known endpoint consequences]\label{rem:Cend}
For $p=1$ and strictly convex $E_i$, the decomposition into $n_i$ copies of $E_i$ and one copy of $C(I_i,E_i)$ allows the application of \cite[Theorems~3.3--3.5]{WangOrihara}.
Each component is of $C_0(\Omega,E_i)$ type.
None is isometric to real $\ell_\infty^2$, and $C(I_i,E_i)$ is the unique component that is not strictly convex.
Consequently, every surjective isometry between the unit spheres extends, $n_1=n_2=n$, and the extension is
\[
 \widetilde{\Delta}(f)(x)=\sum_{j=0}^{n-1}
 U_j\Big(f^{(\pi(j))}(x_1)\Big)
 \frac{(x-x_2)^j}{j!}
 +\frac1{(n-1)!}\int_{x_2}^x(x-t)^{n-1}
                         V\big(f^{(n)}\big)(t)\,dt,
\]
where $\pi$ is a permutation and $U_j:E_1\to E_2$ and $V:C(I_1,E_1)\to C(I_2,E_2)$ are surjective real-linear isometries.
The converse follows from the decomposition.

For scalar-valued functions and $p=\infty$,
\[
 C_\infty^n(I,x_0;\K)\cong C(I\sqcup D_n,\K),
\]
where $D_n$ is a discrete set of $n$ points.
The scalar continuous-function extension theorems~\cite{WangContinuous,MoriOzawa}, followed by the real
Banach--Stone theorem, give extension and equality of the orders, since a homeomorphism preserves the isolated points. 
For general $E$, the analogous identification is with $C(I\sqcup D_n,E)$; an extension conclusion then requires a suitable vector-valued continuous-function theorem.
\end{remark}

\begin{remark}[Scalar representation]
For a common $n\geq1$ and $1\leq p\leq\infty$, the scalar extensions above can be written as
\[
 \widetilde{\Delta}(f)(x)=\sum_{j=0}^{n-1}(Ua_f)_j\frac{(x-x_2)^j}{j!}
 +\frac1{(n-1)!}\int_{x_2}^x(x-t)^{n-1}h(t)\kappa_\varepsilon\Big(f^{(n)}(\tau(t))\Big)\,dt,
 \quad a_f=(f^{(j)}(x_1))_{j=0}^{n-1}.
\]
Here $\tau:I_2\to I_1$ is a homeomorphism, $h$ is continuous with $|h|=1$, and $\varepsilon\in\{0,1\}$ is constant in the complex case. 
In the real case, $h$ is a constant sign and $\varepsilon=0$.
The real Banach--Stone representation of the continuous-function component is determined by $\tau,h$ and $\varepsilon$~\cite{FlemingJamison}.
For $p\ne2$, the finite-dimensional component has the form
\[
 (Ua)_j=\omega_j\kappa_{\varepsilon_j}(a_{\pi(j)}),
\]
with a permutation $\pi$, unimodular $\omega_j$, and independent conjugation choices; over $\R$ the factors are signs and all $\varepsilon_j=0$.
The stated representation of $U$ follows from preservation of disjoint coordinate supports for $1\leq p<\infty$, $p\ne2$, and duality for $p=\infty$. 
For $p=2$, $U$ is an arbitrary element of $O(n)$ over $\R$, or $O(2n)$ over $\C$ viewed as a real space. 
Conversely, every choice of $U$, $\tau$, $h$, and $\varepsilon$ satisfying the stated conditions determines a surjective real-linear isometry.
\end{remark}

It is worth emphasizing that, in the scalar-valued case, the differentiability order $n$ is determined by the metric structure of the unit sphere. 
More precisely, for $1<p_1,p_2<\infty$, Corollary~\ref{cor:vector} shows that the existence of a surjective isometry between the unit spheres of $C_{p_1}^{n_1}(I_1,x_1;\mathbb K)$ and $C_{p_2}^{n_2}(I_2,x_2;\mathbb K)$ forces $(n_1,p_1)=(n_2,p_2)$, without any linearity assumption on the given isometry. 
For a fixed exponent $1<p<\infty$, the preservation of the differentiability order already follows from Wang's theorem~\cite[Theorem 1.8]{WangLp} and the finite-dimensionality of the first component.
The additional feature of Corollary 4.9 is that the outer exponents are also allowed to differ initially and are necessarily equal.

\section*{Use of AI tools}
The author used ChatGPT (OpenAI) to assist with developing and revising mathematical arguments and proof drafts,
organizing the manuscript, and improving its exposition.
This assistance included the direct-sum extension arguments and the function-space applications in Sections~3 and~4.
The author independently checked the mathematical arguments and references, revised the generated material, and takes full responsibility for the final manuscript.


\begin{thebibliography}{99}

\bibitem{Banakh}
T.~Banakh,
\emph{Every $2$-dimensional Banach space has the
Mazur--Ulam property},
Linear Algebra Appl. \textbf{632} (2022), 268--280.
\href{https://doi.org/10.1016/j.laa.2021.09.020}
{doi:10.1016/j.laa.2021.09.020}.

\bibitem{CabezasEtAl}
D.~Cabezas, M.~Cueto-Avellaneda, Y.~Enami, T.~Miura, and A.~M.~Peralta,
\emph{Tingley's problem for complex Banach spaces which do not
satisfy the Hausdorff distance condition},
Banach J. Math. Anal. \textbf{17} (2023), Article 65.
\href{https://doi.org/10.1007/s43037-023-00291-0}{doi:10.1007/s43037-023-00291-0}.
Preprint: \href{https://arxiv.org/abs/2210.17131}{arXiv:2210.17131}.

\bibitem{FlemingJamison}
R.~J.~Fleming and J.~E.~Jamison,
\emph{Isometries on Banach Spaces: Function Spaces},
Chapman \& Hall/CRC Monographs and Surveys in Pure and Applied
Mathematics, vol.~129, Chapman \& Hall/CRC, Boca Raton, FL, 2003.

\bibitem{Fremlin}
D.~H.~Fremlin,
\emph{Measure Theory}, vol.~3: Measure Algebras,
Torres Fremlin, Colchester, 2002.

\bibitem{HirotaMiuraLip}
D.~Hirota and T.~Miura,
\emph{Tingley's problem for a Banach space of Lipschitz functions
on the closed unit interval},
RIMS K\^oky\^uroku Bessatsu \textbf{B93} (2023), 157--181.

\bibitem{LinAC}
M.-R.~Lin,
\emph{A variant of Tingley's problem on ordered Banach spaces of
absolutely continuous functions}, preprint (2026).
\href{https://arxiv.org/abs/2607.10685}{arXiv:2607.10685}.

\bibitem{LinSobolev}
M.-R.~Lin,
\emph{Metric rigidity in anchored Sobolev spaces on intervals},
preprint (2026).
\href{https://arxiv.org/abs/2607.27646}{arXiv:2607.27646}.

\bibitem{Mankiewicz}
P.~Mankiewicz,
\emph{On extension of isometries in normed linear spaces},
Bull. Acad. Polon. Sci. S\'er. Sci. Math. Astronom. Phys.
\textbf{20} (1972), 367--371.

\bibitem{MoriOzawa}
M.~Mori and N.~Ozawa,
\emph{Mankiewicz's theorem and the Mazur--Ulam property
for $C^*$-algebras},
Studia Math. \textbf{250} (2020), no.~3, 265--281.
\href{https://doi.org/10.4064/sm180727-14-11}
{doi:10.4064/sm180727-14-11}.

\bibitem{PeraltaSurvey}
A.~M.~Peralta,
\emph{A survey on Tingley's problem for operator algebras},
Acta Sci. Math. (Szeged) \textbf{84} (2018),
nos.~1--2, 81--123.
\href{https://doi.org/10.14232/actasm-018-255-0}
{doi:10.14232/actasm-018-255-0}.

\bibitem{PeraltaSvarc}
A.~M.~Peralta and R.~\v{S}varc,
\emph{A strengthened Kadison's transitivity theorem
for unital $JB^*$-algebras with applications to the
Mazur--Ulam property},
Anal. Math. Phys. \textbf{15} (2025), Article 91.
\href{https://doi.org/10.1007/s13324-025-01068-4}
{doi:10.1007/s13324-025-01068-4}.

\bibitem{Sourour}
A.~R.~Sourour,
\emph{The isometries of $L^p(\Omega,X)$},
J. Funct. Anal. \textbf{30} (1978), no.~2, 276--285.
\href{https://doi.org/10.1016/0022-1236(78)90075-7}
{doi:10.1016/0022-1236(78)90075-7}.

\bibitem{Tan}
D.-N.~Tan,
\emph{On extension of isometries on the unit spheres of
$L^p$-spaces for $0<p\leq1$},
Nonlinear Anal. \textbf{74} (2011), no.~18, 6981--6987.
\href{https://doi.org/10.1016/j.na.2011.07.035}{doi:10.1016/j.na.2011.07.035}.

\bibitem{Tingley}
D.~Tingley,
\emph{Isometries of the unit sphere},
Geom. Dedicata \textbf{22} (1987), no.~3, 371--378.
\href{https://doi.org/10.1007/BF00147942}{doi:10.1007/BF00147942}.

\bibitem{WangContinuous}
R.~Wang,
\emph{Isometries between the unit spheres of $C_0(\Omega)$-type spaces},
Acta Math. Sci. \textbf{14} (1994), no.~1, 82--89.

\bibitem{WangLp}
R.~Wang,
\emph{Isometries on the $\ell^p$-sum of $C_0(\Omega,E)$ type spaces},
J. Math. Sci. Univ. Tokyo \textbf{3} (1996), no.~2, 471--493.

\bibitem{WangOrihara}
R.~Wang and A.~Orihara,
\emph{Isometries on the $\ell^1$-sum of $C_0(\Omega,E)$ type spaces},
J. Math. Sci. Univ. Tokyo \textbf{2} (1995), no.~1, 131--154.


\end{thebibliography}
\end{document}